\title[Diffeomorphisms with positive metric entropy]{Diffeomorphisms with positive metric entropy}
\author[A. Avila, S. Crovisier, A. Wilkinson]{A. Avila, S. Crovisier, and A. Wilkinson}
\date{\today}
\thanks{A.A. was partially supported by the ERC Starting Grant Quasiperiodic.
A.A. and S.C. were partially supported by the Balzan Research Project of J.Palis.
A.W. was supported by NSF grant DMS-1316534.
}
\address{Artur Avila \newline
\rm CNRS, IMJ-PRG, UMR 7586, Univ. Paris Diderot, Sorbonne Paris Cit\'e,
Sorbonne Universit\'es, UPMC Univ. Paris 06, F-75013, Paris, France \&
IMPA, Estrada Dona Castorina 110, Rio de Janeiro, Brazil.}
\address{Sylvain Crovisier \newline
\rm CNRS - Laboratoire de Math{\'e}matiques d'Orsay, UMR 8628\newline
Universit{\'e} Paris-Sud 11, 91405 Orsay Cedex, France.}
\address{Amie Wilkinson \newline
\rm Department of Mathematics,\newline
University of Chicago, 5734 S. University Avenue Chicago, Illinois 60637, USA.}
\theoremstyle{plain}
\newtheorem{theorem}{Theorem}[section]
\newtheorem{lemma}[theorem]{Lemma}
\newtheorem{corollary}[theorem]{Corollary}
\newtheorem{proposition}[theorem]{Proposition}
\newtheorem{conjecture}{Conjecture}
\newtheorem{quest}[conjecture]{Question}
\newtheorem*{corollary*}{Corollary}
\newtheorem*{claim}{Claim}
\newtheoremstyle{vThm*}%
{}{}%
{\itshape}%
{-3pt}{\bfseries}%
{}{ }%
{\thmnote{#3}}%
\theoremstyle{vThm*}
\newtheorem*{nThm*}{}
\theoremstyle{definition}
\newtheorem*{definition*}{Definition}
\def\diam{\mathrm{diam}}
\def\Diff{\operatorname{Diff} }
\def\diff{\operatorname{Diff} }
\def\title{\em}
\def\id{\operatorname{Id}}
\def\phcs{{H^s_\mathrm{Pes}}}
\def\phcu{{H^u_\mathrm{Pes}}}
\def\phc{{H_\mathrm{Pes}}}
\def\Leb{m}
\def\jac{\operatorname{Jac}}
\def\Jac{\operatorname{Jac}}
\def\Nuh{\operatorname{Nuh}}
\def\supp{\operatorname{supp}}
\def\SL{\operatorname{SL}}
\def\cK{\mathcal{K}}
\def\cU{\mathcal{U}}
\def\cG{\mathcal{G}}
\def\cV{\mathcal{V}}
\def\Homeo{\operatorname{Homeo}}
\def\transverse{\,\raise2pt\hbox to1em{\hfil$\top$\hfil}\hskip -1em \hbox
to1em{\hfil$\cap$\hfil}\,} 
\def\be{\begin{equation}}
\def\ee{\end{equation}}
\def\cG{\mathcal{G}}
\newcommand\R{\mathbb R}
\newcommand\RR{{\mathbb R}}
\newcommand\Z{{\mathbb Z}}
\newcommand\NN{{\mathbb N}}
\newcommand\N{{\mathbb N}}
\newlength{\figboxwidth} \setlength{\figboxwidth}{5.8in}
\let\original@addcontentsline\addcontentsline
\newcommand{\dummy@addcontentsline}[3]{}
\newcommand{\DeactivateToc}{\let\addcontentsline\dummy@addcontentsline}
\newcommand{\ActivateToc}{\let\addcontentsline\original@addcontentsline}
\begin{document}

\begin{abstract}

We  obtain a dichotomy for $C^1$-generic, volume-preserving diffeomorphisms:
either all the Lyapunov exponents of almost every point vanish
or the volume is ergodic and non-uniformly Anosov
(i.e. nonuniformly hyperbolic and the splitting into stable and unstable
spaces is dominated). This completes a program first put forth by Ricardo Ma\~n\'e.
\end{abstract}

\maketitle
\addcontentsline{toc}{section}{\mbox{}\quad\quad Introduction}
\DeactivateToc
\section*{Introduction}

From a probabilistic perspective, ergodicity is the most basic irreducibility property of a dynamical system.
A measurable map $f\colon M\to M$ is {\em ergodic} with respect to an invariant probability measure $\mu$ if every $f$-invariant subset of $M$ is $\mu$-trivial: $f^{-1}(A) = A$ implies $\mu(A) = 0$ or $1$, for every measurable $A\subset M$.   In the context of this paper, where $M$ is a compact manifold, $f$ is a homeomorphism, and $\mu=m$ is a normalized volume, ergodicity is equivalent to equidistribution of almost every orbit: for $m$-almost every $x\in M$ and every continuous $\phi\colon M\to \RR$,
\[
\lim_{n\to \infty} \frac{1}{n} \sum_{j=1}^{n} \phi(f^j(x)) = \int_{M}\phi\, d\operatorname{m}.
\]

Is ergodicity with respect to volume a typical property? The question
was first addressed by Oxtoby and Ulam
in the 1930's \cite{OU},
who proved that the {\em generic} volume-preserving homeomorphism is ergodic; that is,
the set of ergodic maps  in the space $\Homeo_\mathrm{vol}^+(M)$ of volume-preserving homeomorphisms
contains a countable intersection of open and dense sets in the uniform topology.  A natural question, still open in general, is whether such a result extends to the space of volume-preserving {\em diffeomorphisms}.

If one looks at the other extreme of regularity, $C^\infty$ diffeomorphisms, ergodicity is not a typical property at all: KAM theory guarantees on any manifold of dimension at least $2$ an {\em open} set of diffeomorphisms  in $\Diff^\infty_\mathrm{vol}(M)$ that are not ergodic.  This paper focuses on the lowest class of differentiability, $C^1$ diffeomorphisms, where the question is still open: {\em is ergodicity a generic property in the space $\Diff^1_\mathrm{vol}(M)$ of $C^1$ volume-preserving diffeomorphisms of a compact manifold $M$?}  

As a first approach  to this question, one should ask 
whether the techniques of the Oxtoby-Ulam proof can be extended 
to the $C^1$ setting.  There is an immediate obstruction: metric entropy.  The same technique (namely periodic approximation) that proves genericity of ergodicity in \cite{OU} also proves that the metric entropy $h_{m}(f)$  of a generic
 $f\in \Homeo_\mathrm{vol}^+(M)$ is $0$.  The corresponding statement is false for 
$\Diff^1_\mathrm{vol}(M)$, as we explain below: there are {\em open} sets of diffeomorphisms  $f\in\Diff^1_\mathrm{vol}(M)$ with $h_{m}(f)>0$.
{\color{black} Thus the Oxtoby-Ulam technique cannot be na\"ively extended from the $C^0$-category to prove general results about  $C^1$-generic diffeomorphisms.}

This phenomenon of robustly positive entropy is
most clearly demonstrated by the  Anosov maps, in which every direction
in the tangent bundle to $M$ sees expansion or contraction under iteration of the
derivative $Df^n$.
Interestingly, 
this {\em uniformly hyperbolic} behavior that gives rise to positive metric
entropy in Anosov systems is also the source of a powerful mechanism for 
ergodicity, known as the Hopf argument~\cite{Anosov}, which is of a very different nature than the Oxtoby-Ulam mechanism.
Here we show for generic diffeomorphisms in
 $\Diff^1_\mathrm{vol}(M)$,   positive metric entropy is associated with 
a strong type of non-uniformly hyperbolic behavior, which we call \emph{non uniformly Anosov}. 
Harnessing this nonuniform hyperbolicity, we prove:

\begin{nThm*}{{\bf Theorem A.}}
$C^1$-generically, a volume-preserving diffeomorphism $f\colon M\to M$ of a compact
manifold $M$ with positive entropy is ergodic.
\end{nThm*}

Our proof of this theorem completes a program first put forth by Ricardo Ma\~n\'e to understand  the Lyapunov exponents of volume-preserving diffeomorphisms from a $C^1$-generic perspective.  In his 1983 ICM address~\cite{M},
Ma\~n\'e announced the following remarkable result, whose proof was later completed by Bochi~\cite{B}.

\begin{nThm*}{{\bf Theorem. (Ma\~n\'e-Bochi)}}
$C^1$-generically, an  area preserving diffeomorphism $f$ of a compact
connected surface $M$ is either Anosov (and ergodic) or satisfies
\[\lim_{n\to\pm \infty} \frac{1}{n} \log\| D_x f^nv\|
= 0,\]
for a.e. $x\in M$ and every  $0 \neq v \in T_xM$.
\end{nThm*}

Our main result gives the optimal generalization to higher dimensions:

\begin{nThm*}{{\bf Theorem B.}}
$C^1$-generically, a volume-preserving diffeomorphism $f$ of a compact connected
manifold $M$ is either nonuniformly Anosov and ergodic or satisfies
\[\lim_{n\to\pm \infty} \frac{1}{n} \log\| D_x f^nv\|
= 0\]
for a.e. $x\in M$ and every  $0 \neq v \in T_xM$.
\end{nThm*}

Theorem B  was conjectured in its present form by Avila-Bochi \cite {AB} where it was shown that
generic diffeomorphisms in $\Diff^1_\mathrm{vol}(M)$ with only non-zero Lyapunov
exponents almost everywhere are ergodic and non-uniformly Anosov.
In dimension three, Theorem B was proved by M.A. Rodr\'\i guez-Hertz \cite
{jana} by reducing to an analysis of dominated splittings admitting some
uniformly hyperbolic subbundles, which have been thoroughly described
for $3$-manifolds.  Our proof of Theorem B in the general case follows a very
different route, focused on the elimination of zero Lyapunov exponents
throughout large parts of the phase space.

{\color{black} In another paper~\cite{ACW}, we will use Theorem B above in order to prove a $C^1$-version of a conjecture
by Pugh and Shub: among smooth partially hyperbolic volume-preserving diffeomorphisms,
the stably ergodic ones are $C^1$-dense.}
\bigskip

Before exploring {\color{black} further consequences} of Theorems A and B, we put it in context and explain the terminology.  Throughout, $M$ will denote a closed connected Riemannian manifold with dimension $d$,  and $\Diff^r(M)$ will denote the set of $C^r$ diffeomorphisms of $M$ endowed with the $C^r$-topology.
The volume induces, after normalization, a Borel probability measure $m$ and
we denote by  $\Diff^r_\mathrm{vol}(M)$  the set of $f\in \Diff^r(M)$ preserving $m$.
Both  $\Diff^r(M)$ and $\Diff^r_\mathrm{vol}(M)$
are Baire spaces.  We say that 
a property of (volume-preserving) diffeomorphisms is {\em $C^r$ generic} if it holds
on a dense $G_\delta$ (i.e., a countable intersection of open-dense sets) in  $\Diff^r(M)$ (respectively $\Diff^r_\mathrm{vol}(M)$).

A  measure of chaoticity for volume-preserving diffeomorphisms is given by the notion of
Lyapunov exponents.
A real number $\chi$ is a \emph{Lyapunov exponent} of $f$ at $x\in M$ if there exists a nonzero vector
$v\in T_xM$ such that
\begin{equation}\label{e=lyaplim}
\lim_{n\to\pm\infty} \frac{1}{n} \log \|Df^n(v)\| = \chi.
\end{equation}
Oseledets's ergodic theorem implies that there is a set $\Omega\subset M$ of total measure
 -- i.e., $\mu(\Omega) = 1$, for every invariant  Borel probability measure $\mu$ --
 {\color{black} with the following property: for any $x\in \Omega$ there exists $\ell(x)\geq 1$ and 
and a $Df$-invariant splitting
\begin{equation}\label{e=oseledec}
T_x M = E_1(x)\oplus E_2(x)\oplus \cdots \oplus E_{\ell(x)}(x), 
\end{equation}
depending measurably on $x$}
such that  the limit 
$\chi=\chi(x,v)$ in \eqref{e=lyaplim}
exists for every every $v\in E_i(x)\setminus \{0\}$.
The value $\chi(x,v)$ is constant in $E_i(x)\setminus \{0\}$
so that $\chi(x,\cdot)$ can assume at most $\dim(M)$ distinct values $\chi_1(x), \ldots, \chi_{\ell(x)} (x)$.
If $f$ preserves the volume $m$, then the sum of the Lyapunov exponents is zero
{\color{black} on a set of total }  {\color{black} measure}.

Lyapunov exponents can be used to control a more familiar barometer of chaos, namely the metric (or measure-theoretic) entropy.   Entropy and Lyapunov exponents of $C^1$ diffeomorphisms are related by Ruelle's inequality, which states that for $f\in\Diff^1(M)$ preserving a Borel probability $\mu$,
\begin{equation*}
h_\mu(f) \leq  \int_M  \sum_{\chi_i(x) \geq 0} \dim(E_i(x)) \chi_i(x) \, d\mu(x).
\end{equation*}
For $\mu=m$, the reverse equality was proved by Pesin for all $f\in \Diff^2_\mathrm{vol}(M)$  and  generically
in $\Diff^1_\mathrm{vol}(M)$  by Tahzibi~\cite{tahzibi1} and Sun-Tian~\cite{sun-tian}.
In particular for generic $f\in \Diff^1_\mathrm{vol}(M)$, the metric entropy vanishes exactly
when the second case of Theorem B occurs. Hence Theorem B implies Theorem A.
\bigskip

In his 1983 address mentioned above, Ma\~n\'e proposed to study how the 
 ``Oseledets splitting"
(\ref{e=oseledec}) varies as a function of the diffeomorphism $f$,
in the $C^1$ topology.
A diffeomorphism is Anosov 
if there exists
a continuous $Df$-invariant splitting
\begin{equation}\label{e=Anosov}
TS = E^u\oplus E^s
\end{equation}
and $0<\lambda < 1$,  $n_0 \in \N$,
such that $\|Df^n|E^u\| \leq \lambda^n$ and $\|(Df^n|E^s)^{-1}\| \leq
\lambda^n$ for every $n \geq n_0$. 
In this case, the (measurable) Oseledets splitting (\ref{e=oseledec}) refines the (continuous) Anosov 
splitting (\ref{e=Anosov}) and the Lyapunov exponents are nonzero (either smaller than $-|\log(\lambda)|$
or larger than $+|\log(\lambda)|$).
This property is extremely rigid in low dimension (and conjecturally rigid in all dimensions): in particular, if $f$ is an Anosov diffeomorphism of a surface,
then $M$ is a torus, and $f$ is topologically conjugate to a hyperbolic linear automorphism.  Thus
the Ma\~n\'e-Bochi theorem implies that if $M$ is not a torus, then the $C^1$-generic area-preserving diffeomorphism of $M$ has metric entropy~0.

However, in higher dimensions {\em uniform} hyperbolicity is too much to aim for:
 any volume-preserving diffeomorphism admitting a
{\it dominated splitting} must
have robustly positive metric entropy.
 A diffeomorphism $f \in \Diff^1(M)$ is said
to admit a (global) dominated splitting if there exists a continuous
non-trivial decomposition $TM=E_1\oplus E_2$ that is $Df$-invariant and
satisfies {\color{black}  \[ \|(Df^N|E_1)^{-1}\| \|Df^N|E_2\| <1,\]}
for some $N \in \N$.  Thus $f$ is an Anosov map if and only if it admits a
uniformly hyperbolic dominated splitting. 

While in dimension $2$ a dominated
splitting for an area-preserving diffeomorphism is always Anosov,
already in dimension $3$ there are
manifolds that do not support Anosov dynamics, but which are compatible
with a dominated splitting\footnote{\color{black} On the unit tangent bundle of a hyperbolic surface,
the geodesic flow is Anosov; hence
its time-one map is a diffeomorphism preserving a dominated splitting.
However this manifold does not support any Anosov diffeomorphism,
since, in dimension $3$, only the torus has this property
(Franks-Newhouse theorem~\cite{franks,newhouse}).}.

In the presence of robust obstructions to
uniform hyperbolicity, the best one can hope for
is to obtain a dominated splitting $TM=E^+\oplus E^-$ that is {\em non-uniformly hyperbolic}, 
in the sense that  there exists $\chi_0>0$ such that  for $m$-a.e. $x\in M$,
each Lyapunov exponent is either smaller than $\chi_0$ or larger than $\chi_0$.
This leads to:

\begin{definition*} A diffeomorphism $f\in \Diff^1_\mathrm{vol}(M)$ admitting a non-uniformly hyperbolic dominated splitting will be called {\it
non-uniformly Anosov}. 
Equivalently, $f$ is non-uniformly Anosov if it possesses a dominated splitting $TM=E^+\oplus E^-$
and if there exists $0<\lambda<1$ such that for $\Leb$-almost every $x \in M$,
there exists $n_0(x) \in \N$ such that
$\|Df^n(x)|E^-(x)\| \leq \lambda^n$ an $\|Df^{-n}(x)|E^+(x)\| \leq \lambda^n$ for every $n \geq n_0(x)$. 
\end{definition*}

{\color{black} The class of non-uniformly Anosov diffeomorphisms is strictly larger than the Anosov class;}
{\color{black}Shub and Wilkinson~\cite{SW} constructed an open set of non-uniformly Anosov diffeomorphisms
in $\Diff^2_\mathrm{vol}(\mathbb{T}^3)$ that are not Anosov. (Their construction is at the root of one of the arguments
used in this paper; see Section~\ref{s.technique}.)}

The existence of a dominated splitting is a robust dynamical
property (i.e., stable under perturbations in $\Diff^1(M)$), as is uniform
hyperbolicity.  A striking consequence of Theorem~B {\color{black} (proved in Section~\ref{ss.robustness})}    is thus:

\begin{nThm*}{\bf{Corollary 1.}}
A map $f \in \Diff^1_\mathrm{vol}(M)$ has robust positive metric entropy  if and only if
it admits a dominated splitting.
\end{nThm*}

These results highlight the unique features of the  $C^1$ topology.
At least conjecturally, sufficiently regular
volume-preserving diffeomorphisms are expected
to be compatible with a quite different phenomenon: the
coexistence of quasiperiodic behavior (where  Lyapunov exponents vanish)
with chaotic, non-uniformly hyperbolic behavior (inducing positive metric entropy).  Even on surfaces, this
{\color{black} problem} remains open.

\ActivateToc
\addcontentsline{toc}{subsection}{\mbox{}\quad\quad\quad Discussion and questions}
\DeactivateToc
\subsection*{Discussion and questions}

We return briefly to the question posed at the beginning of the paper: {\em 
Is ergodicity a generic property in $\Diff^1_\mathrm{vol}(M)$?}
Some partial results are known.  Bonatti and Crovisier proved~\cite{BC} that transitivity (i.e., existence of a dense orbit) is a generic property in $\Diff^1_\mathrm{vol}(M)$
(the topological mixing also holds~\cite{AC}).  A property in between transitivity and ergodicity with respect to volume is {\em metric transitivity}, where almost every orbit is dense.  A weaker question is thus:

\begin{quest} Is metric transitivity generic in $\Diff^1_\mathrm{vol}(M)$?
\end{quest}

The next question relates to the Oxtoby-Ulam technique \cite{OU}.  If $f$
has entropy $0$, then the results in \cite{BDP},  \cite{BC}
and \cite{A} show that it can be perturbed to have
a dense set of periodic balls.  
\begin{quest} Can every $f\in \Diff_m^1(M)$ of entropy $0$ be $C^1$ approximated by an almost everywhere periodic diffeomorphism (i.e. a diffeomorphism whose periodic points
have full measure)?
\end{quest}

In the case of $C^1$-generic diffeomorphisms with positive entropy, a next goal would be to describe better their 
measurable dynamics.  Some additional argument gives the following corollary of Theorem B,
which is proved in Section~\ref{ss.weak-mixing}:

\begin{nThm*}{{\bf Corollary 2.}}
The generic $f \in \Diff^1_\mathrm{vol}(M)$ with positive metric entropy is weakly mixing.
\end{nThm*}

Due to the lack of regularity in  $\Diff^1_\mathrm{vol}(M)$ we cannot use Pesin theory  to  get the Bernoulli
property to be generic.

\begin{quest}
Are generic nonuniformly Anosov diffeo\-morphisms in $\Diff_m^1(M)$ \\ Bernoulli?
or at least strongly mixing?
\end{quest}

Even among the class of $C^1$ Anosov diffeomorphisms, genericity of the mixing
condition is an open question.
\medskip

{\color{black} When $M$ is endowed with a symplectic form $\omega$, one may also consider the space of diffeomorphisms
$\Diff^1_\omega(M)$ that preserve $\omega$. For ``technical reasons,"  Ma\~n\'e focuses on this case in~\cite{M}:
the symplectic rigidity imposes some symmetry in the Oseledets splitting. The argument developed in the present paper
(Theorem C below) can not be transposed in this setting. Some partial results have been obtained for
$C^1$-generic symplectomorphisms: for instance~\cite{ABW} proves that if there exists an invariant global dominated splitting,
then the volume is ergodic (but it is not non-uniformly hyperbolic, unless the diffeomorphism is Anosov).
}{\color{black} In an upcoming work we will prove the symplectic version of Theorem A, using different (and simpler!) methods that are special to the symplectic setting.}

\ActivateToc


\section{The main technique: localized, pointwise perturbations of central Lyapunov exponents}\label{s.technique}
From the development of Pesin Theory and the gradual taming of
(sufficiently regular)
non-uniformly hyperbolic dynamics which followed (\cite {Pe}, \cite {katok}),
it has been a central problem to understand how often such systems arise.
While it is understood that the ``opposite'' behavior, the vanishing of all
Lyapunov exponents, does appear robustly (through the KAM mechanism),
it has been proposed by Shub and Wilkinson (\cite {SW}, Question 1a)
that for typical orbits of a generic $C^r$ conservative
dynamical system, the presence of some non-zero Lyapunov exponent implies
in fact that all Lyapunov exponents are non-zero.  Such an optimistic
picture was motivated by an argument, introduced in the same paper, which
allows one to leverage (in a particularly controlled setting)
the non-zero Lyapunov exponents to ``perturb away''
the zero Lyapunov exponents.

The specific situation considered by Shub and Wilkinson consisted of a
trivial circle extension of a linear Anosov map.  This is a partially
hyperbolic dynamical system with a one-dimensional central direction along
which the Lyapunov exponent vanishes everywhere.
Through a carefully designed perturbation, the central
bundle borrows some of the hyperbolicity from the uniformly
expanding bundle, so the {\em average} central Lyapunov exponent becomes
positive.  In order to show that the actual Lyapunov exponent along the
center is non-zero almost everywhere, they observe that the system can be,
at the same time, made ergodic by a separate argument (based on the
Pugh-Shub ergodicity mechanism).

This argument has been pursued further, in low regularity,
by Baraviera and Bonatti \cite {BB}. 
They consider conservative
diffeomorphisms admitting a dominated splitting $TM=E_1 \oplus
\cdots \oplus E_k$ and show that the average of the
sum of the Lyapunov exponents along any subbundle can be made non-zero by
a $C^1$ perturbation.  This result was used by Bochi, Fayad and Pujals
in \cite {BFP} to show that stably ergodic diffeomorphisms, which admit a
dominated splitting by \cite {BDP}, can be
made non-uniformly hyperbolic by perturbation.
\bigskip

In a sense, here we do just the opposite of
\cite {BFP}: we show the generic absence of zero
Lyapunov exponents almost everywhere (under the positive entropy assumption)
is a means to conclude ergodicity (via \cite {AB}).
In order to do this, we must
develop a perturbation argument that can affect directly the actual
Lyapunov exponents of
certain orbits inside an invariant region, and not just their averages over the whole manifold.
Without an assumption of ergodicity, these can be different.
This is obtained through the following local, pointwise version of Bonatti-Baraviera's argument~\cite{BB} (see also \cite{SW}). Even for a diffeomorphism that preserves a globally partially hyperbolic structure,
this is a new result. A more precise statement will be given in Section~\ref{s.local}.

 If $\mu$ and $\nu$ are finite Borel measures on $M$,
the notation $\mu\leq \nu$ means that $\mu(A)\leq \nu(A)$ for all measurable sets $A$.
 For $f \in \Diff^1_m(M)$, $x \in M$, and a nontrivial subspace $F \subset T_x M$,
we denote by $\Jac_F(f,x)$ the Jacobian of $Df$ restricted to $F$, i.e., the product of the singular values of $Df(x)|F$.\\

\begin{nThm*}{{\bf Theorem C.}}
Let $f\in \diff^1_m(M)$, and let $K\subset M$ be an invariant compact set such that:
\begin{itemize}
\item $K$
admits a dominated splitting $T_{K}M=E_1\oplus E_2\oplus E_3$ into three non-trivial subbundles;
\item for almost every point $x\in K$ one has
$$\limsup_{n\to \pm \infty} \frac {1} {n} \log \Jac_{E_2(x)}(f^n,x)\leq 0.$$ 
\end{itemize}
Then for every $\varepsilon>0$
and every small neighborhood $Q$ of $K$,
there exists a diffeomorphism $g$ arbitrarily close to $f$ in $\diff^1_m(M)$
such that for every $g$-invariant measure $\nu$ such that $\nu\leq m|Q$
and $\nu(M)\geq \varepsilon$,
one has $\int \log \Jac_{E_2(g,x)}(g,x)d\nu(x)<0$.
\end{nThm*}

In the previous statement
{\color{black} the fibers of the bundles $E_1,E_2, E_3$ do not necessarily have constant
dimension, but one can easily reduce the theorem to this case by decomposing the compact set $K$.}
The expression
$E_1(g)\oplus E_2(g)\oplus E_3(g)$
denotes the \emph{continuation of the dominated splitting} for the diffeomorphism $g$
on any $g$-invariant set contained in a neighborhood of $Q$. (See Section~\ref{ss.continuation}.)
\bigskip

We remark that the
existence of a global dominated splitting, which is a starting point
in \cite {SW} and \cite {BB}, is here also obtained as a consequence of
non-uniform hyperbolicity (again, via \cite {AB}).  The hypothesis of
positive entropy (and hence the existence of some non-zero Lyapunov
exponents) is however enough to obtain local dominated splittings, thanks to a
result of Bochi and Viana \cite {BV} who showed
that, for almost every orbit of generic conservative diffeomorphisms,
the Oseledets splitting extends continuously
to a dominated splitting on its closure.

Our basic technique is the following.  First,  we may assume that
the initial (generic) diffeomorphism has a  positive measure set $K$ of orbits
having some, but not all, non-zero Lyapunov exponents
(otherwise \cite {AB} yields the conclusion at once).
Consider a sufficiently long segment
of a typical orbit
that admits a dominated splitting $E_1 \oplus E_2 \oplus
E_3$, where $E_2$ corresponds to zero Lyapunov exponents.
If this orbit segment is long enough, then it ``sees'' the Lyapunov
exponents of the orbit.  We can then reproduce the perturbation technique
of \cite {SW} and \cite {BB} along the orbit: since this technique concerns average
exponents, we first thicken the initial point to a small
positive measure set, and conclude that the average of the sum of the
Lyapunov exponents along  the central bundle can be decreased.  In order to produce a pointwise estimate, we use a randomization technique introduced by
Bochi in \cite {Bsymp}, which allows us to apply the Law of Large Numbers to
promote  the averaged estimate to a pointwise one.  Using a standard  towers
technique, this argument can be
carried out simultaneously a large set of the orbits remaining within 
the domain of definition $U$ of the local dominated splitting.

Naturally, the perturbation changes the dynamics, so in principle the
decrease of the sum of Lyapunov exponents could be cancelled later.
In fact
the dynamics could change so much that many orbits escape $U$ and we lose
all control, but this ``loss of mass'' is an irreversible event and
thus relatively harmless.  As for possible cancellations, we simply assume
away the problem by restricting attention to
the case where the Lyapunov exponents
along $E_2$ are non-positive for almost every orbit that remains within $U$. 
Remarkably, this seemingly very strong hypothesis can be in fact verified
along the steps of a carefully designed inductive argument.  In any case,
with this assumption we can conclude directly that for most orbits remaining
in $U$ the number of zero Lyapunov exponents is strictly less than the
dimension of $E_2$, after perturbation.

Iterating this argument, we eventually succeed in either eliminating all
non-zero Lyapunov exponents, or in obtaining vanishing Lyapunov exponents
almost everywhere (this happens when we keep running into the situation
where orbits escape the domains of definition of local dominated
splittings).


\section{A dichotomy for conservative diffeomorphisms}
In this section we prove Theorem  B  assuming Theorem C.

\subsection{Dominated splittings and center Jacobians}
Let $f \in \Diff^1(M)$. We recall well-known properties of dominated splittings.
As before $d=\dim(M)$.

\subsubsection{} Given an $f$-invariant compact set $K_f$,
we say that $f|K_f$ admits a \emph{dominated splitting of type
$(d_1,d_2,d_3)$} (where $d_1,d_2,d_3 \geq 0$ and $d_1+d_2+d_3=d$)
if there is an $f$-invariant splitting
$T_x M=E_1(x) \oplus E_2(x) \oplus E_3(x)$ defined over $K$,
where $E_*(x)= E_*(f,x)$ are subspaces of dimension $d_*$, $*=1,2,3$, and
there is an $n \in \N$ such that for each $x\in K$ one has
$$\|(Df^n(x)|E_1(x))^{-1}\|<\|Df^n(x)|E_2(x)\|^{-1},$$
$$\|(D f^n(x)|E_2(x))^{-1}\|<\|Df^n(x)|E_3(x)\|^{-1}.$$
In other words, the smallest contraction along $E_1$ and the largest expansion along $E_3$
dominate the behavior along $E_2$.
{\color{black} For a fixed $d_1, d_2, d_3$}, the $E_*(x)$ are uniquely defined in this
way and depend continuously on $x$.

\subsubsection{}\label{ss.continuation} This dominated splitting is robust in the following sense.  Consider an
arbitrary continuous extension of the $E_*(x)$ to a neighborhood of $K_f$
and consider arbitrary metrics on the Grassmanian manifolds of $M$.
Then for every $\alpha>0$, there are
neighborhoods $\cV \subset \Diff^1(M)$ of $f$ and
$V \subset M$ of $K_f$ such that if $g \in \cV$ and $K_g \subset V$ is a
compact invariant set, then $g|K_g$ admits a dominated splitting of type
$(d_1,d_2,d_3)$, and moreover the spaces $E_*(g,x)$ are $\alpha$-close
to (the extension of) $E_*(f,x)$ for every $x \in K_g$.

\subsubsection{}
Given a compact set $Q \subset M$, we let
$K(f,Q)=\bigcap_{n \in \Z} f^n(Q)$
be its maximal $f$-invariant subset.  Notice that
$K(g,Q) \subset V$ for every neighborhood $V$ of $K(f,Q)$ and every
$g \in \Diff^1(M)$ close to $f$ in the $C^0$ topology.

The previous paragraph thus implies that
\emph{the set of all $g\in \Diff^1(M)$ such that
$g|K(g,Q)$ admits a dominated splitting of type $(d_1,d_2,d_3)$ is open.}
{\color{black} This includes the diffeomorphisms $g$ such that $K(g,Q)$ is empty.}

\subsubsection{}\label{ss.oseledets}
Let $X_f\subset M$ be the set of \emph{Oseledets regular points} $x$ of $f$, i.e.
which have well-defined Oseledets splitting and Lyapunov exponents
$$\lambda_1(f,x)\geq \lambda_2(f,x)\geq\dots\geq \lambda_d(f,x).$$
By Oseledets's theorem, $X_f$ is a measurable $f$-invariant set
{\color{black} of total measure.}
Moreover, the Lyapunov exponents define $d$ functions $\lambda_1,\dots,\lambda_d\in L^1(\mu)$.

For any regular point $x$, by summing all the directions associated to
the positive, zero, or negative Lyapunov exponents, we obtain a splitting:
$$T_xM=E^+(x)\oplus E^0(x)\oplus E^-(x).$$
The dimensions $\dim(E^+(x))$, $\dim(E^-(x))$ are called \emph{unstable} and \emph{stable dimensions}
of $x$.

An invariant probability measure is \emph{hyperbolic} if for almost every point
the Lyapunov exponents are all different from zero.

\subsubsection{}
For $x \in M$ and a subspace $F \subset T_x M$, we let
$$\Delta_F(f,x)={\color{black} \lim_{n\to \pm \infty}} \frac {1} {n} \log \Jac_F(f^n,x),$$
{\color{black} which is well-defined on a set of $x$ of total measure.}
If $x$ is Oseledets regular, and $F$ is a sum of
Oseledets subspaces, then $\frac {1} {n} \log \Jac_F(f^n,x)$  converges
to the sum of the Lyapunov exponents of $f$ along $F$. 
Moreover if $\nu$ is an $f$-invariant finite Borel measure, and $F(x)\subset T_xM$
is a measurable $f$-invariant distribution of subspaces defined $\nu$-almost everywhere,
then for every $n \geq 1$ we have
\begin{equation*}
\int \Delta_{F(x)}(f,x) d\nu(x)=\frac {1} {n} \int \log \Jac_{F(x)}(f^n,x)
d\nu(x).
\end{equation*}

\subsubsection{}
Recall that if $\mu$ and $\nu$ are finite Borel measures, the notation $\mu\leq \nu$ means that $\mu(A)\leq \nu(A)$ for all measurable sets $A$.  This property is equivalent to the two conditions: 
$\mu$ is absolutely continuous with respect to $\nu$ and the Radon-Nikodym derivative $d\mu/d\nu$ is essentially bounded above by $1$.  When $\nu$ is fixed, the set of measures $\mu$ satisfying $\mu\leq \nu$ is clearly compact in the weak-$\ast$ topology.

\subsubsection{} Recall that $m$ is a smooth volume on $M$.
For $\varepsilon>0$ and $Q\subset M $ compact, we denote by $\cG_\varepsilon(Q,d_1,d_2,d_3)$ the set of all $g \in\diff^1(M)$
such that
\begin{itemize}
\item $g|K(g,Q)$ admits a dominated splitting of type $(d_1,d_2,d_3)$ {\color{black} (including the case where $K(g,Q)=\emptyset$)},
\item for every $g$-invariant measure $\nu \leq m|Q$ satisfying
$\nu(M) \geq \varepsilon$, one has
$$
\int \Jac_{E_2(g,x)}(g,x) d\nu(x)<0.
$$
\end{itemize}
{\color{black} The compactness of the set of $\nu$ satisfying $\nu\leq \Leb|Q$ and the openness of the dominated splitting condition give: }
\begin{lemma}\label{l.G-open}

For every $\varepsilon>0$, the set $\cG_\varepsilon(Q,d_1,d_2,d_3)$ is open in $\diff^1(M)$.

\end{lemma}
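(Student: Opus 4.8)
\emph{Setup.} The first condition in the definition of $\cG_\varepsilon(Q,d_1,d_2,d_3)$ — that $g|K(g,Q)$ admit a dominated splitting of type $(d_1,d_2,d_3)$ — already cuts out an open subset of $\diff^1(M)$, as recorded just before the lemma. So it is enough to show that $\cG_\varepsilon$ is sequentially open, and I would argue by contradiction. Suppose $f\in\cG_\varepsilon$ and $g_k\to f$ in $\diff^1(M)$ with $g_k\notin\cG_\varepsilon$ for every $k$. By openness of the first condition, $g_k$ carries a dominated splitting of type $(d_1,d_2,d_3)$ on $K(g_k,Q)$ for $k$ large; hence for such $k$ the diffeomorphism $g_k$ must violate the second condition, i.e.\ there is a $g_k$-invariant measure $\nu_k\leq m|Q$ with $\nu_k(M)\geq\varepsilon$ and $\int\log\Jac_{E_2(g_k,x)}(g_k,x)\,d\nu_k(x)\geq 0$ (this integral being the same as $\int\Delta_{E_2(g_k,x)}(g_k,x)\,d\nu_k$). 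The plan is to take a weak-$*$ limit of the $\nu_k$ and contradict $f\in\cG_\varepsilon$.

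\emph{Compactness of the measures.} The $\nu_k$ all belong to $\{\mu:\mu\leq m|Q\}$, which is compact in the weak-$*$ topology (as recalled before the lemma), so after passing to a subsequence $\nu_k\to\nu$ weakly-$*$ with $\nu\leq m|Q$. Compactness of $M$ then gives $\nu(M)=\lim_k\nu_k(M)\geq\varepsilon$; the uniform convergence $g_k\to f$ gives that $\nu$ is $f$-invariant; and $f$-invariance of $\nu$ together with $\supp\nu\subset Q$ forces $\nu(K(f,Q))=\nu(M)$, so $\nu$ is carried by the maximal invariant set $K(f,Q)$ on which the splitting $E_1(f)\oplus E_2(f)\oplus E_3(f)$ — in particular the integrand $\log\Jac_{E_2(f,x)}(f,x)$ — is defined.

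\emph{Convergence of the Jacobian integrals.} This is the step I expect to take the most care: the integrands $x\mapsto\log\Jac_{E_2(g_k,x)}(g_k,x)$ live on the moving sets $K(g_k,Q)$ and involve the moving subbundles $E_2(g_k,\cdot)$, so I would first trivialize this dependence via the continuation statement of \S\ref{ss.continuation}. Fix a continuous extension $\widetilde E_2$ of the bundle $E_2(f,\cdot)$ to an open neighborhood $V$ of $K(f,Q)$, and a compact neighborhood $V'\subset V$ of $K(f,Q)$. For $k$ large one has $K(g_k,Q)\subset V'$ (since $g_k\to f$ in $C^0$), and, by \S\ref{ss.continuation}, $E_2(g_k,x)$ is $\alpha_k$-close to $\widetilde E_2(x)$ uniformly over $x\in K(g_k,Q)$, with $\alpha_k\to 0$. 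Let $J(g,x,P)$ denote the logarithm of the product of the singular values of $Dg(x)|P$, for $x\in V'$ and $P$ a $d_2$-plane in $T_xM$; it is jointly continuous in $(g,x,P)$, with $g$ ranging over the $C^1$-topology. Since $\{g_k\}\cup\{f\}$ is $C^1$-compact, $J$ has a single modulus of continuity in the plane variable over this family, whence
\[
\sup_{x\in K(g_k,Q)}\bigl|\log\Jac_{E_2(g_k,x)}(g_k,x)-J(g_k,x,\widetilde E_2(x))\bigr|\longrightarrow 0 .
\]
Moreover $g\mapsto J(g,\cdot,\widetilde E_2(\cdot))$ is continuous into $C(V')$, so $J(g_k,\cdot,\widetilde E_2(\cdot))\to J(f,\cdot,\widetilde E_2(\cdot))$ uniformly on $V'$; and since $\nu_k$ (for $k$ large) and $\nu$ are all supported in $V'$ by the same invariance argument, weak-$*$ convergence gives $\int J(f,\cdot,\widetilde E_2(\cdot))\,d\nu_k\to\int J(f,\cdot,\widetilde E_2(\cdot))\,d\nu$. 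Combining these three facts (and $\nu_k(M)\leq 1$),
\[
\int\log\Jac_{E_2(g_k,x)}(g_k,x)\,d\nu_k(x)\ \longrightarrow\ \int J(f,x,\widetilde E_2(x))\,d\nu(x)=\int\log\Jac_{E_2(f,x)}(f,x)\,d\nu(x),
\]
the last equality because $\nu$-a.e.\ $x$ lies in $K(f,Q)$, where $\widetilde E_2=E_2(f,\cdot)$. The quantities on the left are $\geq 0$, so the limit is $\geq 0$; but $\nu$ is an $f$-invariant measure with $\nu\leq m|Q$ and $\nu(M)\geq\varepsilon$, so $f\in\cG_\varepsilon$ forces exactly this integral to be $<0$ — the desired contradiction. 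Hence $\cG_\varepsilon(Q,d_1,d_2,d_3)$ is open.

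\emph{Main obstacle.} The only substantive point is the third step: identifying the limit of the Jacobian integrals despite the $g_k$-dependence of both the support of $\nu_k$ and of the subbundle $E_2$. The continuation of dominated splittings (\S\ref{ss.continuation}) is precisely what converts this into the convergence of a single fixed continuous function against weakly convergent measures. Everything else — no escape of mass (compactness of $M$), $f$-invariance of the weak-$*$ limit, and the uniform modulus of continuity for $J$ coming from $C^1$-compactness of $\{g_k\}\cup\{f\}$ — is routine.
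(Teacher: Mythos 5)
Your proof is correct and follows the same route as the paper: pass to a weak-$*$ limit of the measures $\nu_k$ witnessing the failure of the second condition for $g_k$, and show the limit measure witnesses failure for $f$. The paper's proof is terser and simply asserts the convergence $\int \log\Jac_{E_2(g_n,x)}(g_n,x)\,d\nu_n\to\int\log\Jac_{E_2(g,x)}(g,x)\,d\nu$; the extra work you do in your third step (trivializing the $g_k$-dependence of the bundle via a fixed continuous extension $\widetilde E_2$ and using a uniform modulus of continuity) is exactly the justification the paper leaves implicit, and it is carried out correctly.
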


\begin{proof}
Consider $(g_n)$ converging to $g$ in $\diff^1(M)$ and assume
$g_n \notin \cG_\varepsilon(Q,d_1,d_2,d_3)$. 
We have to prove that $g \notin \cG_\varepsilon(Q,d_1,d_2,d_3)$.
For the sake of contradiction, by Section~\ref{ss.continuation} it suffices to
 assume that the {\color{black} $g_n|K(g_n,Q)$} admit dominated splittings of type $(d_1,d_2,d_3)$.
Let
$\nu_n \leq m|Q$ be a sequence of $g_n$-invariant measures
satisfying $\nu_n(M) \geq \varepsilon$ and $\int \Jac_{E_2(g_n,x)}(g_n,x)
d\nu_n(x) \geq 0$.  Let $\nu$ be a weak-$*$ limit of $\nu_n$.  Then
$\nu \leq \Leb|Q$ is $g$-invariant and satisfies $\nu(M) \geq \varepsilon$ and
$\int \Jac_{E_2(g,x)} d\nu(x) \geq 0$. Hence $g\not\in \cG_\varepsilon(Q,d_1,d_2,d_3)$.
\end{proof}

\subsection{Oseledets blocks}
For $f\in \Diff_m^1(M)$, the set of regular points $X_f$ splits
into $f$-invariant measurable subsets $X_f(d_1,d_2,d_3)$, $d_1+d_2+d_3=d$ and $d_*\geq 0$,
defined as the set of points admitting $d_1$ positive, $d_2$ zero and
and $d_3$ negative Lyapunov exponents (counted with multiplicity).
Note that:
\begin{itemize}
\item $X_f(0,d,0)$ is the set of points whose Lyapunov exponents are all zero;
\item the set of \emph{non-uniformly hyperbolic points}, denoted by $\Nuh_f$
is the union of the sets $X(d_1,0,d_3)$, with $d_1,d_3>0$;
\item by volume preservation, the other non-empty sets satisfy $d_1,d_2,d_3>0$.
\end{itemize}

\subsubsection{Domination.}
Oseledets and dominated splittings coincide generically.

\begin{theorem}[Bochi-Viana~\cite{BV}]\label{t.BV}
For any diffeomorphism $f$ in a dense G$_\delta$ subset of $\diff^1_m(M)$
and for any $\varepsilon>0$, for each Oseledets block $X_f(d_1,d_2,d_3)$
there exists an $f$-invariant compact set $K$ satisfying:
\begin{itemize}
\item $f|K$ admits a dominated splitting of type $(d_1,d_2,d_3)$,
\item $m(X_f(d_1,d_2,d_3)\setminus K)\leq \varepsilon$. 
\end{itemize}
\end{theorem}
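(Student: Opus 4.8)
The plan is to deduce the statement from the known generic coincidence of Oseledets and dominated splittings along typical orbits, in the form established by Bochi and Viana. The key structural input is that, $C^1$-generically, for almost every point $x$ the Oseledets splitting $T_xM = E^+(x)\oplus E^0(x)\oplus E^-(x)$ extends to a \emph{dominated} splitting over the closure of the orbit of $x$ (this is exactly the main result of \cite{BV}, applied here with the three-block grouping by sign of the exponent rather than the finer grouping into distinct exponents). Granting this, the argument is a measure-theoretic compactness/exhaustion step.

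First I would fix a diffeomorphism $f$ in the dense $G_\delta$ set provided by \cite{BV} and fix $\varepsilon>0$ and a triple $(d_1,d_2,d_3)$ with $d_1+d_2+d_3=d$. For each $x$ in the full-measure set $Y\subset X_f(d_1,d_2,d_3)$ of points whose orbit closure carries a dominated splitting of type $(d_1,d_2,d_3)$ extending the Oseledets splitting, let $K_x := \overline{\{f^n(x): n\in\ZZ\}}$. Each $K_x$ is $f$-invariant, compact, and $f|K_x$ admits a dominated splitting of type $(d_1,d_2,d_3)$, and $x\in K_x$. Thus $Y \subset \bigcup_{x\in Y} K_x$, so by inner regularity of $m$ there is a countable subcollection $K_{x_1}, K_{x_2},\dots$ whose union has full measure in $X_f(d_1,d_2,d_3)$ — more precisely, $m\big(X_f(d_1,d_2,d_3)\setminus \bigcup_j K_{x_j}\big)=0$.

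Next I would turn this countable union into a single compact invariant set carrying a dominated splitting. The obstacle here, and the main point of the argument, is that an infinite union of sets with dominated splittings of the same type need not itself have one (the domination constant $n$ could blow up); and the closure of $\bigcup_j K_{x_j}$ could acquire new points with no controlled splitting. To handle this, I would observe that for a fixed $N\in\NN$ the set of points satisfying the $N$-domination inequalities (in the sense of Section 2.1.1) is closed, and write $X_f(d_1,d_2,d_3)$ up to a null set as an increasing union over $N$ of the sets $\Lambda_N$ of points whose orbit closure is $N$-dominated of type $(d_1,d_2,d_3)$; since $m(\Lambda_N)\uparrow m(X_f(d_1,d_2,d_3))$, choose $N$ with $m(X_f(d_1,d_2,d_3)\setminus\Lambda_N)<\varepsilon$. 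Now $\Lambda_N$ is $f$-invariant, and I would take $K:=\overline{\Lambda_N}$: by closedness of the $N$-domination condition, the invariant continuous splitting on $\Lambda_N$ (the $E_*$ depend continuously on $x$ by Section 2.1.1) extends to an $N$-dominated splitting of type $(d_1,d_2,d_3)$ on $K$, and $m(X_f(d_1,d_2,d_3)\setminus K)\le m(X_f(d_1,d_2,d_3)\setminus\Lambda_N)<\varepsilon$. This gives the required compact invariant set $K$, completing the proof.
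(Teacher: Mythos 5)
The paper does not supply a proof of this statement: it is quoted directly from Bochi--Viana \cite{BV}, so there is no in-paper argument to compare against. Your strategy---deducing the statement from the pointwise Bochi--Viana dichotomy (for a.e.\ $x$, any grouping of Oseledets subspaces across a spectral gap is dominated over the orbit closure) via a measure-theoretic exhaustion over domination strengths $N$---is the natural derivation and the overall plan is sound. Two points need repair.

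First, the intermediate step ``by inner regularity of $m$ there is a countable subcollection $K_{x_1}, K_{x_2},\dots$ whose union has full measure'' does not follow: the cover $Y\subset\bigcup_{x\in Y}K_x$ reduces to $Y=\bigcup_{x\in Y}\{x\}$, and if every orbit closure $K_x$ is $m$-null no countable subcollection can have positive measure. Since you then abandon this route for the $\Lambda_N$ argument this is not load-bearing, but it should simply be deleted.

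Second, and more substantively, the sets $\Lambda_N$ of points whose orbit closure carries an $N$-dominated splitting of type $(d_1,d_2,d_3)$ are \emph{not} increasing in $N$: $N$-domination implies $kN$-domination for every integer $k\geq 1$, but not $(N+1)$-domination, so ``$m(\Lambda_N)\uparrow m(X_f(d_1,d_2,d_3))$'' is false as written. The fix is to note that $\bigcup_N\Lambda_N$ is conull in $X_f(d_1,d_2,d_3)$ and that $\bigcup_{N\leq N_0}\Lambda_N\subset\Lambda_{N_0!}$, so one may choose $N_0$ with $m\bigl(X_f(d_1,d_2,d_3)\setminus\bigcup_{N\leq N_0}\Lambda_N\bigr)<\varepsilon$ and take $K:=\overline{\Lambda_{N_0!}}$. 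Finally, the passage to the closure needs a bit more than an appeal to Section~2.1.1, which is stated for compact invariant sets: on the (not a priori compact) set $\Lambda_{N_0!}$ the splitting is well-defined and continuous because $N_0!$-dominated splittings of a fixed type are unique and the domination inequality forces a uniform lower bound on the angles between the subbundles; any Grassmannian limit at a boundary point of $\Lambda_{N_0!}$ then inherits the same inequalities, producing the required dominated splitting on the compact invariant set $K$. With these repairs the argument is correct.
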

{\color{black} In the previous theorem, the set $K$ is not necessarily contained in
$X_f(d_1,d_2,d_3)$.}

\subsubsection{The non-uniformly hyperbolic set.}
Generically the non-uniformly hyperbolic set $\Nuh_f$
coincides $m$-almost everywhere with a single Oseledets block. 

\begin{theorem}[Avila-Bochi~\cite{AB}, Theorem A]\label{t.AB}
For any diffeomorphism $f$ in a dense G$_\delta$ subset of $\diff^1_m(M)$,
either $m(\Nuh_f)=0$ or $\Nuh_f$ is dense in $M$ and the restriction
$m|\Nuh_f$ is ergodic.
\end{theorem}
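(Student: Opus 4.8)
This statement is Theorem~A of Avila--Bochi~\cite{AB}; let me describe the strategy I would follow. The plan is to work with an $f$ lying in the intersection of: the dense $G_\delta$ of Theorem~\ref{t.BV} (Bochi--Viana); the dense $G_\delta$ of Theorem~\ref{t.continuation}, on which every $\lambda_i$ is continuous, so that the values of the $\lambda_i$ cannot be lowered by an arbitrarily small perturbation; the dense $G_\delta$ of transitive diffeomorphisms (Bonatti--Crovisier~\cite{BC}); and a residual set where the closing and connecting lemmas apply. Fixing such an $f$, one may assume $m(\Nuh_f)>0$, and the goal becomes: $\Nuh_f$ is dense in $M$, and $m|\Nuh_f$ is ergodic.

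\emph{Dominated structure, a single Oseledets block, density.} Since $\Nuh_f=\bigcup_{d^u,d^s>0}X_f(d^u,0,d^s)$ has positive measure, some block $X_f(d^u,0,d^s)$ does; applying Theorem~\ref{t.BV} with $\varepsilon_n\to 0$ gives compact invariant sets $K_n$ carrying a dominated splitting $E^+\oplus E^-$ of type $(d^u,0,d^s)$ with $m(X_f(d^u,0,d^s)\setminus K_n)\to 0$, and by \S\ref{ss.continuation} this splitting extends continuously to a dominated splitting over $\overline{\Nuh_f}$. If the exponents along $E^+$ (resp.\ $E^-$) were not uniformly bounded away from $0$ on the $K_n$, a Bochi--Ma\~n\'e--Viana perturbation would strictly decrease $\int\sum_{\chi_i>0}\dim(E_i)\,\chi_i\,dm$, contradicting that $f$ is a continuity point of the $\lambda_i$; a related use of continuity of the $\lambda_i$ forces at most one value of $(d^u,d^s)$ (necessarily with $d^u+d^s=d$) to carry positive measure, so $m(\Nuh_f\setminus X_f(d^u,0,d^s))=0$. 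Hence $m$-a.e.\ $x\in\Nuh_f$ has Pesin manifolds $W^+(x),W^-(x)$ tangent to $E^+,E^-$, of complementary dimensions, and of size bounded below on compact sets of measure arbitrarily close to $m(\Nuh_f)$. For density: $\overline{\Nuh_f}$ is compact invariant, so if it had nonempty interior $V$ then $\bigcup_n f^n(V)$ would be a nonempty open invariant subset of it, dense by transitivity, giving $\overline{\Nuh_f}=M$; if instead $\overline{\Nuh_f}\neq M$ one derives a contradiction by perturbing near a recurrent point of the open invariant set $M\setminus\overline{\Nuh_f}$ (connecting lemma) so as to create nonuniform hyperbolicity there in a way incompatible with Corollary~\ref{c.continuity}.

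\emph{Ergodicity --- the main obstacle.} It remains to show the ergodic decomposition of $m|\Nuh_f$ is trivial. In class $C^{1+\alpha}$ this is the Hopf argument: the Pesin stable and unstable laminations are absolutely continuous, invariant functions are a.e.\ constant along the leaves of each, and --- using transitivity together with the uniform size and complementary dimension of the leaves from the previous paragraph --- these local constancies chain into a global one. In class $C^1$ absolute continuity genuinely fails, and supplying a substitute is the crux of~\cite{AB}: my plan would be to assume $m|\Nuh_f$ splits into two mutually singular ergodic components $m_1,m_2$ --- each non-uniformly hyperbolic with the dominated splitting above and with large Pesin manifolds on a positive-measure set --- and then exploit genericity (density of $\Nuh_f$, transitivity, the closing/connecting lemmas, and the absence of any \emph{robust} obstruction to a transverse intersection of a stable and an unstable Pesin plaque) to force a point $x$ with $m_1$-typical forward behavior whose unstable manifold meets, transversally and on a set of positive conditional measure, the stable manifold of a point $y$ with $m_2$-typical backward behavior; a measurable --- not absolutely continuous --- Fubini argument along the resulting local product structure, combined with the ergodicity of the local hyperbolic dynamics, would then contradict $m_1\perp m_2$. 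Making this last step work without the absolute continuity of the Pesin laminations is the real difficulty, and is the main technical achievement of~\cite{AB}.
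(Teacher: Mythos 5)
The paper does not prove this statement: it is imported verbatim as Theorem~A of Avila--Bochi~\cite{AB} and used as a black box, so there is no ``paper's own proof'' to compare your attempt against. What one can do is compare your sketch against the actual argument of~\cite{AB}, which is visible in outline from the way the present paper invokes it (in particular the citation of~\cite[Lemma 5.1]{AB} in the proof of Corollary~A'').

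Your preliminary steps (working at a continuity point of the $\lambda_i$, using Theorem~\ref{t.BV} to get a dominated splitting $E^+\oplus E^-$ over the closure of $\Nuh_f$, isolating a single Oseledets block $X_f(d^u,0,d^s)$ with $d^u+d^s=d$) are in the right spirit, although the one-block reduction and the density claim are asserted rather than proved. The genuine problem is the ergodicity step, and here your proposed route diverges from~\cite{AB} and, as you yourself concede, does not close. You propose to assume two mutually singular ergodic components and then run a ``measurable, not absolutely continuous, Fubini argument'' along a local product structure of Pesin stable/unstable manifolds. Without absolute continuity of the Pesin laminations this cannot work as stated: the stable lamination of an $m_1$-typical point can meet the unstable lamination of an $m_2$-typical point transversally and in positive conditional measure, and yet both holonomies can carry Lebesgue to singular measures, so no contradiction with $m_1\perp m_2$ is forced. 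The whole point of absolute continuity in the Hopf argument is precisely to rule this out, and there is no $C^1$ substitute of the kind you describe.

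The route actually taken in~\cite{AB} is an approximation argument, not a direct $C^1$ Hopf argument. One uses the $C^1$-density of $C^\infty$ (or $C^2$) conservative diffeomorphisms (Zehnder in dimension two, Avila in general, stated here as Theorem~\ref{t.smoothing}) to approximate $f$ by smooth $g$. For such $g$, Pesin theory and absolute continuity are available, and one obtains finitely many hyperbolic periodic orbits whose Pesin homoclinic classes cover almost all of $\Nuh_g$ and are ergodic; this is the content of~\cite[Lemma 5.1]{AB} as cited in this paper. The continuity of the integrated Lyapunov exponents at $f$ (Theorem~\ref{t.continuation}) and the persistence of the dominated structure are then what let one pass this ergodicity back to the $C^1$ limit $f$, rather than any measurable Fubini. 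So the step you flag as the ``real difficulty'' is resolved in~\cite{AB} by $C^2$ regularization plus continuity of exponents, a mechanism your sketch does not mention; in its current form your proposal has a gap exactly at this point.
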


\subsubsection{The set where all exponents vanish.} As a consequence we get
(see also \cite {AB}, Corollary 1.1):
\begin{corollary}\label{c.vanish}
For any diffeomorphism $f$ in a dense G$_\delta$ subset of $\diff^1_m(M)$,
if $m(\Nuh_f)>0$, then there exists a global dominated splitting
$TM=E\oplus F$ on $M$ such that for $m$-almost every point $x\in \Nuh_f$,
$$
v\in E(x)\setminus\{0\} \implies \lim_{n\to \infty}\frac1n\log\|D_xf^n(v)\| >0,
$$
and
$$
v\in F(x)\setminus\{0\} \implies \lim_{n\to \infty}\frac1n\log\|D_xf^n(v)\| <0.
$$
In particular, $X_f(0,d,0)=\emptyset$.
\end{corollary}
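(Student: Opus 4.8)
The plan is to reduce to a single Oseledets block, globalize the dominated splitting furnished by Bochi--Viana over that block, and then extract both the sign of the exponents on $\Nuh_f$ and the emptiness of $X_f(0,d,0)$ from the uniformity of domination. Take $f$ in the intersection of the dense $G_\delta$ sets of Theorems~\ref{t.BV} and \ref{t.AB} and of the set of transitive conservative diffeomorphisms \cite{BC}, and assume $m(\Nuh_f)>0$. The blocks $X_f(d_1,0,d_3)$ with $d_1,d_3\ge1$ are $f$-invariant Borel sets partitioning $\Nuh_f$, so the ergodicity of $m|\Nuh_f$ from Theorem~\ref{t.AB} puts all of this mass on a single block $X_f(d_1^*,0,d_3^*)$, with $d_1^*+d_3^*=d$ and $d_1^*,d_3^*\ge1$; in particular $m(\Nuh_f\,\triangle\,X_f(d_1^*,0,d_3^*))=0$. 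Theorem~\ref{t.AB} also gives that $\Nuh_f$ is dense in $M$.

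To build the global splitting, apply Theorem~\ref{t.BV} to the block $X_f(d_1^*,0,d_3^*)$: for each $n$ one gets an $f$-invariant compact set $K_n$ with a dominated splitting of type $(d_1^*,0,d_3^*)$ and $m(X_f(d_1^*,0,d_3^*)\setminus K_n)\le 1/n$. Dominated splittings of a fixed type are unique where they exist, so these agree on overlaps and $\Lambda_0:=\bigcup_nK_n$ is an $f$-invariant set carrying a dominated splitting of type $(d_1^*,0,d_3^*)$; moreover $m(\Lambda_0)=m(\Nuh_f)>0$, so $\Lambda_0\ne\emptyset$. A dominated splitting on an invariant set extends continuously to one of the same type on its closure, so $\overline{\Lambda_0}$ carries such a splitting. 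It then remains to see that $\overline{\Lambda_0}=M$: its complement is an open $f$-invariant set, and one rules it out using that $\Lambda_0$ has positive measure and coincides up to a null set with the dense set $\Nuh_f$, together with the generic transitivity of $f$. Denote by $TM=E\oplus F$ the resulting splitting on $M$, with $\dim E=d_1^*$ and $\dim F=d_3^*$, $E$ the more expanding bundle.

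Now fix a regular point $x$ of the full-measure subset $X_f(d_1^*,0,d_3^*)$ of $\Nuh_f$. A continuous invariant subbundle belonging to a dominated splitting equals, at a regular point, the sum of the Oseledets subspaces carrying the largest exponents; since $x$ has exactly $d_1^*$ positive and $d_3^*$ negative exponents and no zero exponent, this forces $E(x)=E^+(x)$ and $F(x)=E^-(x)$. Hence for nonzero $v\in E(x)$ the limit $\lim_n\tfrac1n\log\|D_xf^n(v)\|$ equals the largest exponent appearing in $v$, which is positive, and symmetrically it is negative for nonzero $v\in F(x)$; this is the asserted dichotomy on $\Nuh_f$. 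For the last clause, recall that domination of $E$ over $F$ on the compact $M$ is equivalent to the existence of $C\ge1$ and $\tau\in(0,1)$ with $\|D_xf^n(v)\|\le C\tau^n\|D_xf^n(w)\|$ for every $x\in M$, $n\ge0$, and unit vectors $v\in F(x)$, $w\in E(x)$ (nontrivially using $\dim E,\dim F\ge1$). If $x\in X_f(0,d,0)$ then $x$ is regular with all exponents zero, so for unit $v\in F(x)$, $w\in E(x)$ one has $\tfrac1n\log(\|D_xf^n(v)\|/\|D_xf^n(w)\|)\to0$, contradicting $\tfrac1n\log C+\log\tau\to\log\tau<0$. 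Therefore $X_f(0,d,0)=\emptyset$.

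The delicate step is the globalization: Theorem~\ref{t.BV} only supplies the dominated splitting on a subset of $\Nuh_f$ of full $m$-measure, and upgrading this to all of $M$ amounts to showing $\overline{\Lambda_0}=M$, i.e. that the positive-measure invariant set $\Lambda_0$ leaves no open invariant gap. Density of $\Nuh_f$ alone does not suffice for this (a dense set can lose density after removing a null set), so one must bring in further generic properties of conservative $C^1$ diffeomorphisms --- at the least the transitivity of \cite{BC}, and in effect the full support of $m|\Nuh_f$ --- which is the point of the argument requiring the most care.
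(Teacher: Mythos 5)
Your proof follows the same route as the paper's: identify a single Oseledets block $X_f(d_1,0,d_3)$ using the ergodicity of $m|\Nuh_f$ from Theorem~\ref{t.AB}, invoke Bochi--Viana (Theorem~\ref{t.BV}) to produce a compact invariant set with a nontrivial dominated splitting exhausting $\Nuh_f$ in measure, argue that this set must be all of $M$, and then read off the exponent dichotomy and the emptiness of $X_f(0,d,0)$. There are two small differences of economy. First, you build an increasing union $\Lambda_0=\bigcup_n K_n$ and then pass to its closure; the paper picks a single $K$ from Theorem~\ref{t.BV} with $m(\Nuh_f\setminus K)<m(\Nuh_f)$ and invokes ergodicity of $m|\Nuh_f$ once more to conclude $m(\Nuh_f\setminus K)=0$, so the compact set is already in hand, with no union and no closure to take. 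Second, you spell out the identification of $E$ and $F$ with the sums of Oseledets subspaces and the uniform-gap argument ruling out $X_f(0,d,0)$, both of which the paper leaves implicit with a ``hence.''

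The step you flag as delicate is indeed the crux, and your unease is warranted. The paper asserts that $K$ contains $m$-a.e.\ point of $\Nuh_f$ and ``hence coincides with $M$''; for this, topological density of $\Nuh_f$ is not enough --- one needs that $m|\Nuh_f$ has full topological support, so that a closed set of full $m|\Nuh_f$-measure must be all of $M$. The paper does not prove this within the corollary but cites \cite{AB}, Corollary~1.1, where the full-support content of the density statement is established. Your appeal to transitivity from \cite{BC} does not close the gap on its own: if $K$ were a proper compact invariant set, transitivity only shows $M\setminus K$ is dense, which is perfectly compatible with $m(K)>0$. So the ingredient you describe as ``in effect the full support of $m|\Nuh_f$'' is precisely what is needed, and it should be cited to \cite{AB} rather than left as a hedge.
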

\begin{proof}
{\color{black} Theorem~\ref{t.AB} implies that, $C^1$-generically, 
if $\Nuh_f$ has positive volume, then it is dense in $M$, the restriction of $m$ is ergodic
and it coincides with a set $X(d_1,0,d_3)$. Suppose then
that $\Leb(\Nuh_f)>0$, and let $\varepsilon = \Leb(\Nuh_f)/2$. 
By Theorem~\ref{t.BV},  there exists an invariant compact set $K$ with 
$\Leb(\Nuh_f\setminus K)<\varepsilon$
that admits a non-trivial dominated splitting.  In particular,  $\Leb(\Nuh_f\cap K) > 0$; since $\Leb|\Nuh_f$ is ergodic, this implies that $\Leb(\Nuh_f\setminus K) = 0$. This proves that the compact set $K$ contains $m$-almost every point of $\Nuh_f$, and hence coincides with $M$, since $\Nuh_f$ is dense in $M$. We have thus proved that $M$ has a non-trivial dominated splitting, and
so the set $X_f(0,d,0)$ is empty.}
\end{proof}

\subsubsection{The other Oseledets blocks.}
Using Theorem C we get:
\begin{corollary}\label{c.others}
For any diffeomorphism $f$ in a dense G$_\delta$ subset of $\diff^1_m(M)$,
the Oseledets blocks $X_f(d_1,d_2,d_3)$ with $d_1,d_2,d_3>0$ have volume zero.
\end{corollary}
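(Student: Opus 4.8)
The plan is to combine the Bochi–Viana theorem (Theorem~\ref{t.BV}), which provides local dominated splittings of the correct type on large compact invariant subsets of the Oseledets block $X_f(d_1,d_2,d_3)$, with Theorem~C, which lets us perturb to make the integrated central Jacobian negative for all measures supported in the relevant region, and then to iterate. Fix $d_1,d_2,d_3>0$. The set $\cG_\varepsilon(Q,d_1,d_2,d_3)$ of Lemma~\ref{l.G-open} is open, so one natural statement to aim for is: for a dense $G_\delta$ set of $f$, for every $\varepsilon>0$ there is a compact $Q$ with $m(X_f(d_1,d_2,d_3)\setminus K(f,Q))<\varepsilon$ and $f\in\cG_\varepsilon(Q,d_1,d_2,d_3)$; once this is in place, a Baire-category argument over a countable basis of $\varepsilon$'s and $Q$'s yields a dense $G_\delta$ on which the conclusion holds. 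The content is therefore \emph{density}: given any $f_0$ (say, already in the dense $G_\delta$ of Theorems~\ref{t.BV}, \ref{t.continuation}, \ref{t.AB}), approximate it by some $g\in\cG_\varepsilon(Q,d_1,d_2,d_3)$.

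First I would set up the target region. Apply Theorem~\ref{t.BV} to get an $f$-invariant compact $K$ with a dominated splitting $T_KM=E_1\oplus E_2\oplus E_3$ of type $(d_1,d_2,d_3)$ and $m(X_f(d_1,d_2,d_3)\setminus K)$ small. Choose a small compact neighborhood $Q$ of $K$; then $K(f,Q)$ is close to $K$, still carries the continued dominated splitting, and still contains most of the Oseledets block. The hypothesis of Theorem~C that is not automatic is the one-sided bound $\Delta_{E_2(x)}(f,x)=\limsup_{n\to\pm\infty}\frac1n\log\Jac_{E_2(x)}(f^n,x)\le 0$ for a.e.\ $x\in K(f,Q)$: this is exactly the ``seemingly very strong hypothesis'' flagged in the introduction. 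On the Oseledets block $X_f(d_1,d_2,d_3)$ itself the central sum of exponents is $0$ by definition, so on the part of $K(f,Q)$ coming from that block the bound holds with equality; the issue is the orbits trapped in $Q$ that are \emph{not} regular with exactly $(d_1,d_2,d_3)$-type. The way the introduction tells us to handle this is an inductive argument: one first eliminates the blocks where the central sum is negative on a positive-measure invariant subset (pushing them into $\Nuh_f$ or into blocks with a smaller zero-dimension), proceeding so that by the time we treat $X_f(d_1,d_2,d_3)$, the residual trapped dynamics in $Q$ has nonpositive central Jacobian. I would phrase this as an induction on $d_2$ (or on a suitable ordering of triples), invoking Corollary~\ref{c.vanish} to rule out $d_2=d$ already.

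With the hypotheses of Theorem~C verified for $f$ on $K=K(f,Q)$, apply it: for the given $\varepsilon$ and the neighborhood $Q$ we obtain $g$ arbitrarily $C^1$-close to $f$, preserving $m$, with $\int\log\Jac_{E_2(g,x)}(g,x)\,d\nu<0$ for every $g$-invariant $\nu\le m|Q$ with $\nu(M)\ge\varepsilon$; that is, $g\in\cG_\varepsilon(Q,d_1,d_2,d_3)$. Since $\cG_\varepsilon$ is open (Lemma~\ref{l.G-open}), this set is now open and we have shown it is dense (after the inductive bookkeeping), so its intersection over a countable family is the desired dense $G_\delta$. It remains to check that $g\in\cG_\varepsilon(Q,d_1,d_2,d_3)$ forces $m(X_g(d_1,d_2,d_3))$ small. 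Indeed, if $x\in X_g(d_1,d_2,d_3)$ has a forward orbit that stays in $Q$ and is generic for an ergodic component of $m|\{$orbit stays in $Q\}$, then that component $\nu$ satisfies $\nu\le m|Q$ and $\int\log\Jac_{E_2(g,x)}(g,x)\,d\nu=\sum(\text{central exponents})=0$, contradicting the strict negativity once $\nu(M)\ge\varepsilon$; so apart from an $\varepsilon$-set of mass, a.e.\ point of $X_g(d_1,d_2,d_3)$ must eventually leave $Q$, i.e.\ lie outside $K(g,Q)$, and hence outside most of where the old block lived. Running $\varepsilon\to0$ and $Q$ exhausting the approximants of the block gives $m(X_f(d_1,d_2,d_3))=0$ on the residual set.

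The main obstacle, clearly, is the inductive verification of the one-sided central-Jacobian hypothesis of Theorem~C — arranging the order of perturbations so that, when we come to destroy $X_f(d_1,d_2,d_3)$, no previously-created positive central Jacobian survives on the trapped set in $Q$, and controlling the irreversible ``loss of mass'' as orbits escape $Q$ under the successive perturbations. A secondary technical point is making the whole scheme compatible with Baire category: the neighborhoods $\cU$, the compact sets $Q$, and the thresholds $\varepsilon$ must be drawn from countable families, and one must check that passing to $g$ does not spoil the genericity properties (Theorems~\ref{t.BV}, \ref{t.continuation}, \ref{t.AB}, Corollary~\ref{c.continuity}) needed to restart the induction — which is why the statement is only for a dense $G_\delta$ and the argument is organized as an intersection of countably many open dense sets rather than a single perturbation.
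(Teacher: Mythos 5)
Your skeleton is the right one — Theorem~\ref{t.BV} to produce $K$, Theorem~C to perturb, Lemma~\ref{l.G-open} plus a Baire dichotomy ($f\in\cG_\varepsilon(Q,d_1,d_2,d_3)$ or $f\in\Diff^1_m(M)\setminus\overline{\cG_\varepsilon(Q,d_1,d_2,d_3)}$) to convert a single approximating $g\in\cG_\varepsilon$ into the conclusion that $f$ itself lies in $\cG_\varepsilon$, and then $m(X_f(d_1,d_2,d_3)\cap K)\le\varepsilon$. You also correctly identify that the crux is verifying the one-sided hypothesis of Theorem~C, and that this is done inductively. But the induction you sketch is not the one that works, and this is precisely where the proof lives. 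The paper's induction is \emph{increasing on $d_2+d_3$}, not on $d_2$. The reason this ordering is forced: the inductive hypothesis (all blocks $X_f(d_1',d_2',d_3')$ with $d_*'>0$ and $d_2'+d_3'<d_2+d_3$ have volume zero) implies that almost every $x\in K$ belongs to a positive-volume block with $d_1'\le d_1$; since the dominated splitting on $K$ respects the Oseledets ordering of exponents, $E_1(f,x)$ consists of the top $d_1$ Oseledets subspaces at $x$, hence $E^+(x)\subset E_1(f,x)$ and therefore $E_2(f,x)\subset E^0(x)\oplus E^-(x)$, giving $\Delta_{E_2(f,x)}(f,x)\le0$. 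Your description (``eliminates the blocks where the central sum is negative \ldots pushing them into $\Nuh_f$ or into blocks with a smaller zero-dimension'') describes a different and unclear mechanism; ``induction on $d_2$'' in either direction does not deliver the needed inclusion $E_2\subset E^0\oplus E^-$.

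Two further points you gloss over. First, the paper splits on whether $\Nuh_f$ has positive volume. When it does, Corollary~\ref{c.vanish} provides a global dominated splitting $TM=E\oplus F$ which constrains the positive-volume blocks to satisfy $d_2+d_3\le d_-$ or $d_1+d_2\le d_+$, and this is what makes $\Delta_{E_2(f,x)}(f,x)<0$ on $K\cap\Nuh_f$; without this split the hypothesis of Theorem~C cannot be verified at the $\Nuh_f$-points of $K$. Second, the hypothesis in Theorem~C is on a.e.\ point of the invariant compact $K$ (the set produced by Bochi--Viana), not on $K(f,Q)$; so the ``orbits trapped in $Q$'' that you worry about are simply not part of what needs checking. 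Relatedly, there is no need to ``restart the induction for $g$'': you never pass to $g$ — once the dichotomy set is fixed and $g\in\cG_\varepsilon(Q,d_1,d_2,d_3)$ is found arbitrarily close to $f$, openness forces $f\in\cG_\varepsilon(Q,d_1,d_2,d_3)$, and from there one computes directly with $f$.
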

\begin{proof}
Let $\cK$ be a countable family of compact sets of $M$
such that for any $K\subset U\subset M$, with $K$ compact and $U$
open, there exists $Q\in \cK$ satisfying
$K\subset Q\subset U$.
By Lemma~\ref{l.G-open}, one can assume that for any $Q\in \cK$, any $\varepsilon>0$
such that $1/\varepsilon\in \NN$, and any type $(d_1,d_2,d_3)$,
the diffeomorphism $f$ either belongs to
$\cG_\varepsilon(Q,d_1,d_2,d_3)$ or to
$\Diff^1_\mathrm{vol}(M)\setminus \overline{\cG_\varepsilon(Q,d_1,d_2,d_3)}$.
 \medskip

{\color{black}
\noindent
\paragraph{\bf Case 1. The case $\Nuh_f$ has zero volume.}
We prove by increasing induction on $d_2+d_3$
that $X_f(d_1,d_2,d_3)$ has volume zero, for each triple $(d_1,d_2,d_3)$ with $d_1+d_2+d_3=d$
and $d_1,d_2,d_3>0$.
We thus fix $(d_1,d_2,d_3)$ and assume that $m(X_f(d'_1,d'_2,d'_3))=0$
for each triple $(d'_1,d'_2,d'_3)$ such that $d'_2+d'_3<d_2+d_3$ and $d'_1,d'_2,d'_3>0$.

\begin{claim}
For any set $X_f(d'_1,d'_2,d'_3)$ with positive volume, one has $d'_2+d'_3\geq d_2+d_3$.
\end{claim}
\begin{proof}
We consider separately the three possible cases:
\begin{itemize}
\item $(d'_1,d'_2,d'_3)=(0,d,0)$: the claim holds trivially,
\item $d'_1,d'_2,d'_3$ are all non zero: our inductive assumption implies the claim,
\item $d'_2= 0$: this does not occur since $\Nuh_f$ has zero volume.
\end{itemize}
\end{proof}

We fix $\varepsilon>0$ with $1/\varepsilon\in \NN$.
By Theorem~\ref{t.BV} there exists an invariant compact set $K$ {\color{black} (possibly empty)} 
such that $m(X_f(d_1,d_2,d_3)\setminus K)$ is smaller than $\varepsilon$
and such that $f|K$ admits a dominated splitting $E_1\oplus E_2\oplus E_3$ of type $(d_1,d_2,d_3)$.

Almost every point $x\in K$ belongs to a set
$X_f(d'_1,d'_2,d'_3)$ with positive volume.
By the claim above, $d'_2+d'_3\geq d_2+d_3$.
As a consequence $E_2(f,x)$ is contained
in the sum of the central and the stable spaces of the Oseledets decomposition at $x$.
This implies $\Delta_{E_2(f,x)}(f,x)\leq 0$.

We have proved that the assumptions of Theorem C are satisfied.
We choose a small neighborhood $Q\in \cK$ of $K$.
There exists $g$ arbitrarily close to $f$ in $\Diff^1_\mathrm{vol}(M)$
such that for every invariant measure $\nu\leq m|Q$
such that $\nu(M)\geq \varepsilon$,
one has $\int_X\log \jac_{E_2(g,x)}d\nu(x)<0$.
In particular $g$ belongs to $\cG_\varepsilon(Q,d_1,d_2,d_3)$, and  hence
$f$ does as well (recall that
$f$ belongs to the union of the open sets ${\cG_\varepsilon(Q,d_1,d_2,d_3)}$ and
$\Diff^1_\mathrm{vol}(M)\setminus \overline{\cG_\varepsilon(Q,d_1,d_2,d_3)}$). It follows that $X_f(d_1,d_2,d_3)\cap K$ has volume smaller than $\varepsilon$.
With our choice of $K$, this proves $m(X_f(d_1,d_2,d_3))\leq 2\varepsilon$.
Since $\varepsilon>0$ has been arbitrarily chosen we get $m(X_f(d_1,d_2,d_3))=0$,
as desired.
The induction on $d_2+d_3$ in $\{1,\dots,d-1\}$ concludes the proof in this case.
 \medskip

\noindent
\paragraph{\bf Case 2. The case $\Nuh_f$ has positive volume.}
In the case $\Nuh_f$ has positive volume, we modify the previous argument.
By Theorem~\ref{t.AB}, there exists $d_+,d_-$ such that
$\Nuh_f$ and $X_f(d_+,0,d_-)$ coincide up to a set of volume zero
and by Corollary~\ref{c.vanish}
there exists a global domination $TM=E\oplus F$ with $\dim(E)=d_+$.

\begin{claim}
If $d_2+d_3\leq d_-$, then
for any set $X_f(d'_1,d'_2,d'_3)$ with positive volume, one has $d'_2+d'_3\geq d_2+d_3$.
\end{claim}
\begin{proof}
One considers the three possible case:
\begin{itemize}
\item $(d'_1,d'_2,d'_3)=(0,d,0)$: the claim holds trivially,
\item $d'_1,d'_2,d'_3$ are all non zero: our inductive assumption implies the claim,
\item $d'_2= 0$: this implies $X_f(d'_1,d'_2,d'_3)=\Nuh_f$; hence $d'_2+d'_3=d_-\geq d_2+d_3$.
\end{itemize}
\end{proof}

The induction of case $1$ can thus be repeated while the condition $d_2+d_3\leq d_-$ of the claim holds.
This proves that the Oseledets blocks $X(d_1,d_2,d_3)$ with $d_1,d_2,d_3>0$
and $d_2+d_3\leq d_-$ have measure zero.

Replacing $f$ by $f^{-1}$, one gets the same conclusion for the blocks $X(d_1,d_2,d_3)$ with $d_1,d_2,d_3>0$
and $d_1+d_2\leq d_+$, i.e. such that $d_-\leq d_3$.
This completes the proof in this second case.}
\end{proof}

\subsection{Proof of Theorem B}
Theorem~\ref{t.AB} and  Corollaries~\ref{c.vanish} and~\ref{c.others} now imply Theorem B.


\section{Local perturbations of  center exponents}\label{s.local}

This section is devoted to the proof of the following,
which implies Theorem C.

\begin{nThm*}{{\bf Theorem C'.}}\label {3.3}
Let $f \in \Diff^1_\mathrm{vol}(M)$, and let $K$ be an $f$-invariant compact set
admitting a dominated splitting $T_{K}M=E_1\oplus E_2\oplus E_3$ into three non-trivial subbundles. Then for any
$\alpha>0$ small and for any neighborhood $\cU\subset \Diff^1_\mathrm{vol}(M)$ of the identity,
there exists $\delta>0$ {\color{black} such that for any $\eta>0$, 
there exists $n_0 \geq 1$ satisfying} the following property.

For any $n\geq n_0$,
any {\color{black} compact} neighborhood $Q$ of $K$ and any {\color{black} $\chi>0$},
there exist a smooth diffeomorphism $\varphi \in \cU$, and a measurable
subset $\Lambda \subset Q$ such that:
\begin{itemize}
\item $\varphi$ is supported on $Q$ and is {\color{black}$\chi$}-close to
the identity in the $C^0$ topology,
\item $m(K \setminus \Lambda)<\eta$,
\item the diffeomorphism $g=f \circ \varphi$ satisfies
\begin{equation}\label{e.decay}
\frac {1} {n} \log \Jac_F(g^n,y) \leq \frac {1} {n} \log\Jac_{E_2(f,y)}(f^n,y)-\delta,
\end{equation}
for every $y \in \Lambda$ such that $y,g^n(y) \in K$, and every
subspace $F  \subset T_y M$ such that $F$ is $\alpha$-close to
$E_2(f,y)$ and $Dg^n(y) \cdot F$ is $\alpha$-close to $E_2(f, g^n(y))$.
\end{itemize}
\end{nThm*}
\medskip

\begin{proof}[Proof of Theorem C from Theorem C']
Consider $f,K,\varepsilon$ as in the statement of Theorem C
and small neighborhoods ${\color{black} \cV}\subset\Diff^1_\mathrm{vol}(M)$ of $f$ and $Q\subset M$ of $K$
such that the maximal invariant set $K(g,Q)$ for any {\color{black} $g\in \cV$} still has a dominated splitting
{\color{black} that extends} the splitting $T_{K}M=E_1\oplus E_2\oplus E_3$ on $K$.
We construct $g$ satisfying the conclusion of the Theorem C.

{\color{black} Let $C_0$ be an upper bound for $d \log \|D g(x)\|$, where $x \in M$, $g\in \cV$.
Fix $\alpha>0$ small.}
Reducing $\cV,Q$ if necessary, for any point
$x\in K(g,Q)\cap K$ the spaces $E_2(f,x)$ and $E_2(g,x)$ are $\alpha$-close.
Theorem C' applied to $\alpha,\cV$, gives {\color{black} $\delta$.
One then chooses $\eta>0$ smaller than
$\min(\varepsilon/10,\delta\varepsilon / 100C_0)$ and Theorem C' gives $n_0$.}
We also take $\kappa>0$
smaller than {\color{black} $\min(\varepsilon/10,{\delta\varepsilon} /{100 C_0})$}.

We choose $n\geq n_0$
and define the {\color{black} compact} set
$$\Omega=\{x \in K,\; \frac {1} {n} \log\Jac_{{\color{black} E_2}(f,x)}(f^n,x) \leq \delta/2\}.$$
If $n$ is large enough,  $K\setminus \Omega$ has measure less than $\kappa$.
For {\color{black} $\chi>0$}  sufficiently small,  shrinking if necessary the neighborhood $Q$,
for any $g$ such that
$g\circ f^{-1}$ is {\color{black} $\chi$}-close to the identity in  the $C^0$ topology, we have:
$$m(K \setminus g^{-n}(K)) \leq \kappa,\quad
m(K(g,Q) \setminus K)\leq \kappa.$$

Theorem C' provides us with a diffeomorphism {\color{black} $g\in \cV$} and a set
$\Lambda$ such that for every $x \in K(g,Q) \cap
K \cap \Lambda \cap \Omega \cap g^{-n}(K)$ one has
$$\frac {1} {n} \log \Jac_{E_2(g,x)}(g^n,x) \leq \frac {1} {n} \log
\Jac_{E_2(f,x)}(f^n,x)-\delta\leq {\color{black} -\delta/2}.$$

Moreover the complement
of the set $Z:=K(g,Q) \cap K \cap \Lambda \cap \Omega \cap g^{-n}(K)$
in $K(g,Q)$ has volume smaller than ${\color{black} 3\kappa+\eta}$.

If $\nu \leq \Leb|Q$ is a $g$-invariant measure with $\nu(M) \geq \varepsilon$,
then $\nu(Z) \geq \varepsilon-3\kappa {\color{black} -\eta}\geq \varepsilon/2$.  Thus
\begin{align*}
\int  \log \Jac_{E_2(g,x)}(g,x) d\nu(x)&= \int
\frac {1} {n} \log
\Jac_{E_2(g,x)}(g^n,x) d\nu(x) \\
\nonumber
&\leq C_0 \nu(M\setminus Z)-\frac {\delta} {2}
\nu(Z)<C_0 {\color{black} (3\kappa+\eta)}-\frac {\delta\varepsilon} {4}<0.\end{align*}
The result follows.
\end{proof}

The construction of the perturbation in Theorem C'
follows three natural steps, and will occupy the remainder of this section.
\bigskip

\subsection{Infinitesimal}
Let $\RR^d=E^+\oplus E^0\oplus E^-$ be an orthogonal decomposition, and
set $d_0=\dim(E^0)$. Let $G
\subset \R^d$ be a two-dimensional subspace that intersects both $E^0$ and
$E^-$ in one-dimensional subspaces, endowed with an arbitrary orientation.
For a subspace $F \subset \R^d$, we let $F^\perp$ denote its orthogonal
complement, and we let $P_F:\R^d \to F$ be the projection with kernel
$F^\perp$. For $\theta \in \R$, let $R_\theta:\R^d \to \R^d$ be the orthogonal operator
that is the identity on $G^\perp$ and that restricted to $G$
is a rotation of angle $2 \pi \theta$ (measured according to the
chosen orientation).

\medskip

\noindent
\emph{Elementary perturbation.}
We introduce a diffeomorphism $\psi^\varepsilon$ which will be used at different places
for the perturbation.
Let $\alpha:\R^d \to \R$ be a smooth function with the following properties:
\begin{itemize}
\item $\alpha(x)=0$ for $x$ in the complement of the unit ball $B:=\{x,\|x\| \geq 1\}$,
\item $\alpha(x)=1$ for $\|x\| \leq 1/2$,
\item $\|\alpha\|_{C^0} \leq 1$,
\item $\alpha(R_\theta \cdot x)=\alpha(x)$ for every $\theta \in \R$ and $x
\in \R^d$.
\end{itemize}
Given $\varepsilon>0$, let $\psi^\varepsilon:\R^d \to \R^d$ be defined by
$\psi^\varepsilon(x)=R_{\varepsilon \alpha(x)} \cdot x$.  It is a smooth,
volume-preserving diffeomorphism
of $\R^d$ and is the identity outside the unit ball. See Figure~\ref{f.phiepsilon}.
{\color{black} We have
$\|\psi^\varepsilon-\id \|_{C^1} \leq \kappa \varepsilon$ for some constant
$\kappa>0$.}
\begin{figure}[h]
\includegraphics[scale=0.35]{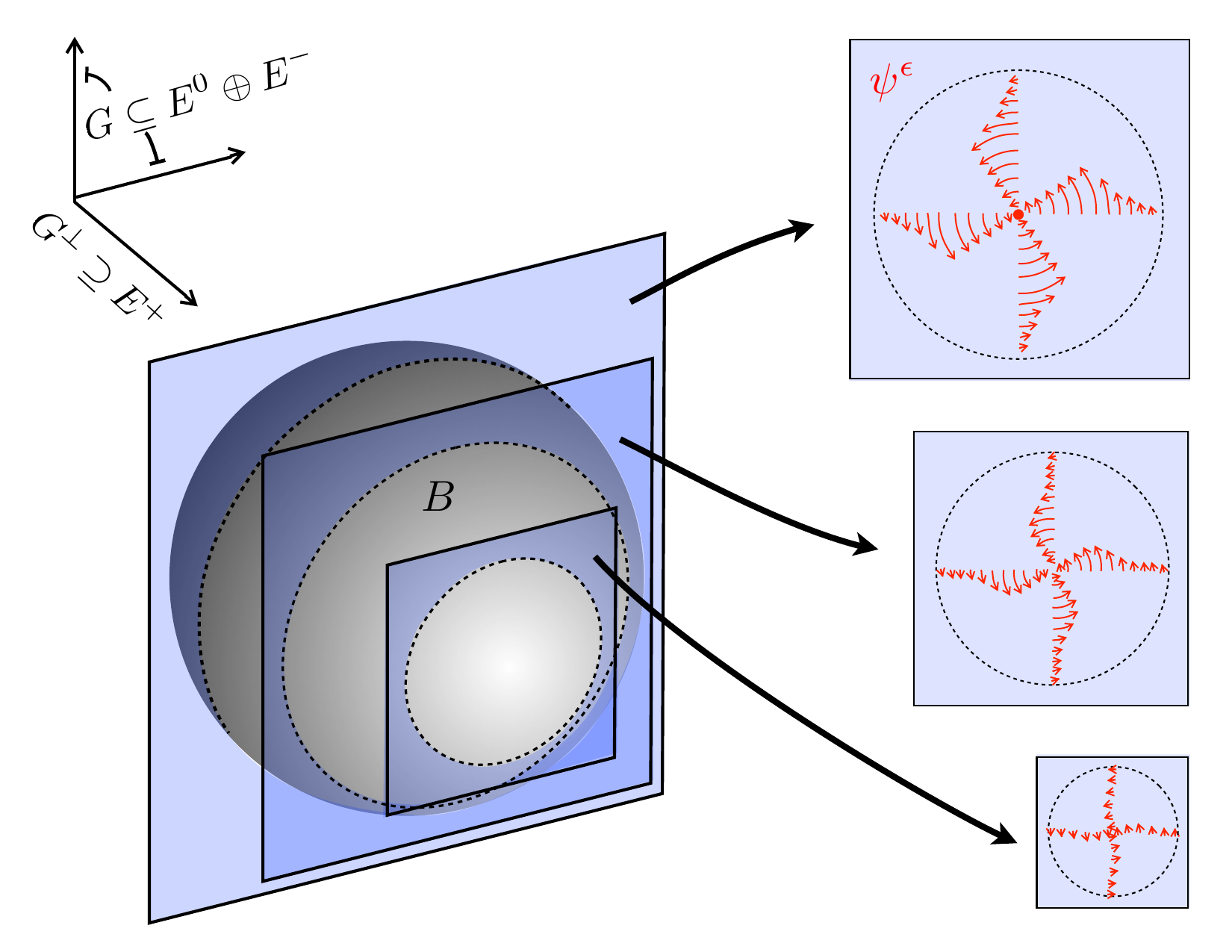}
\caption{\label{f.phiepsilon} The map $\psi^\varepsilon$.}
\end{figure}

Let $\mu_\varepsilon$ be a probability measure in $\SL(d,\R)$ given by the
push-forward under $x \mapsto D \psi^\varepsilon(x)$
of normalized Lebesgue measure $m$ on the unit ball.  Note that for every $A \in
\supp \mu_\varepsilon$, we have $A \cdot (E^0+G)=(E^0+G)$.
We set
\begin{equation}\label{e.c-eps}
 c(\varepsilon)=-\int \log \Jac_{E^0} (P_{E^0} \cdot A) d\mu_\varepsilon(A).
 \end{equation}
{\color{black} Taking $\varepsilon>0$ small enough, the $A\in \supp \mu_\varepsilon$ are close enough to the identity
so that the $\log \Jac_{E^0} (P_{E^0} \cdot A)$ are uniformly bounded. Consequently, $c(\varepsilon)$ is finite.}

We describe the effect of an elementary perturbation averaged on the unit ball.

\begin{lemma}
For every $\varepsilon>0$ sufficiently small, we have $c(\varepsilon)>0$.
\end{lemma}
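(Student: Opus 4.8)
The statement to prove is that $c(\varepsilon) = -\int \log \Jac_{E^0}(P_{E^0} \cdot A)\, d\mu_\varepsilon(A) > 0$ for all small $\varepsilon > 0$. The plan is to expand $A = D\psi^\varepsilon(x)$ to second order in $\varepsilon$ and compute the average of $\log \Jac_{E^0}(P_{E^0} \circ A)$ over the unit ball, showing the first-order term integrates to zero while the second-order term is strictly negative.

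First I would set up coordinates. Since $\psi^\varepsilon(x) = R_{\varepsilon\alpha(x)} \cdot x$ and $R_\theta$ acts as a rotation of angle $2\pi\theta$ in the plane $G$ and as the identity on $G^\perp$, one has $D\psi^\varepsilon(x) = R_{\varepsilon\alpha(x)}\bigl(\id + 2\pi\varepsilon\, x \otimes \nabla\alpha(x)\, J\bigr)$ up to rearrangement, where $J$ is the infinitesimal generator of the rotation in $G$ (i.e. $R_\theta = \exp(2\pi\theta J)$). Writing $A = \id + \varepsilon B(x) + \varepsilon^2 C(x) + O(\varepsilon^3)$, the linear term is $B(x) = \alpha(x)\cdot 2\pi J + 2\pi (\nabla\alpha(x)\cdot x)\, \Pi_G$-type contributions — the precise form matters less than two features: $B(x)$ maps $E^0 + G$ into itself (as noted in the text, $A\cdot(E^0+G) = E^0+G$ for $A \in \supp\mu_\varepsilon$), and the $E^0$-to-$E^0$ block of $\log \Jac$ picks up a genuinely quadratic negative contribution because $G$ meets $E^0$ nondegenerately.

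The key computation: let $\pi^0$ be the matrix of $P_{E^0}\circ A$ restricted to $E^0$, in an orthonormal basis of $E^0$. Then $\Jac_{E^0}(P_{E^0}\cdot A) = |\det \pi^0|$, and $\log\Jac = \log|\det(\id + \varepsilon\, b^0(x) + \varepsilon^2 c^0(x) + \cdots)| = \varepsilon\, \mathrm{tr}\, b^0 + \varepsilon^2\bigl(\mathrm{tr}\, c^0 - \tfrac12 \mathrm{tr}\,(b^0)^2\bigr) + O(\varepsilon^3)$, where $b^0, c^0$ are the $E^0$-blocks of the relevant projected operators. I expect that $\int \mathrm{tr}\, b^0(x)\, dm(x) = 0$ by a symmetry or divergence argument (for instance, using $\alpha(R_\theta x) = \alpha(x)$ and the structure of $B$, or directly that the first-order Jacobian variation of a volume-preserving family averages out since $\psi^\varepsilon$ is volume preserving — one writes the first order term as a total derivative). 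The essential point is then that the $\varepsilon^2$ coefficient, after integration, equals $-\tfrac12 \int \mathrm{tr}\bigl((b^0(x))^2\bigr)\, dm(x) + (\text{integral of a total-derivative-type term})$, and the surviving piece is $-\tfrac12\int \mathrm{tr}((b^0)^2)\, dm$. Since $b^0(x)$ is a real matrix that is not identically zero (because the rotation in $G$ genuinely tilts the one-dimensional line $G\cap E^0$ out of $E^0$, so projecting back strictly shrinks length there — this is exactly where the hypothesis that $G$ meets $E^0$ in a line is used), and because the relevant $b^0$ is in fact a multiple of a nonzero symmetric operator (the "projection loss" is symmetric negative semidefinite at second order), $\mathrm{tr}((b^0)^2) > 0$ on a positive-measure set of $x$ (namely where $\alpha \not\equiv$ const, e.g. $1/2 < \|x\| < 1$). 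Hence the $\varepsilon^2$ coefficient of $-\log\Jac_{E^0}$ integrates to a strictly positive number, so $c(\varepsilon) > 0$ for all small $\varepsilon$.

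The main obstacle I anticipate is bookkeeping the first-order term: showing $\int \mathrm{tr}\, b^0(x)\, dm(x) = 0$ cleanly. The honest way is to exploit that for each fixed $x$, $A = D\psi^\varepsilon(x) = R_{\varepsilon\alpha(x)}(\id + \varepsilon\, 2\pi(\nabla\alpha(x)\cdot x) J|_{\text{rank-one part}})$; the rotation factor $R_{\varepsilon\alpha(x)}$ contributes, at first order, $2\pi\varepsilon\alpha(x) J$, whose $E^0$-block has trace zero because $J$ is skew and the line $G\cap E^0$ is one-dimensional so $J$ restricted to it followed by projection to $E^0$ has zero trace (a $1\times 1$ computation: $\langle J u, u\rangle = 0$ for the unit vector $u$ spanning $G\cap E^0$, since $J$ is skew); and the rank-one "shear" term $x\otimes\nabla\alpha(x)$ has $E^0$-trace $\langle P_{E^0} x, \nabla\alpha(x)\rangle$-type expressions which, when integrated against $dm$ over the ball, vanish by integration by parts using that $\alpha$ is radial-in-$G$-invariant and compactly supported. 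Once that cancellation is in hand, the positivity of the quadratic term is essentially forced. I would present the argument by first reducing, via the rotation invariance of $\alpha$ and of $m$, to the $G$-plane computation, making the whole thing a two-dimensional calculation with a parameter, which keeps the linear algebra transparent.
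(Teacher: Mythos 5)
Your plan takes a genuinely different route from the paper's proof, and it is worth comparing them because the paper's argument is remarkably short and avoids all the Taylor-expansion bookkeeping you anticipate. The paper observes that slicing the unit ball along the $g_0$-direction (where $g_0$ spans $G\cap E^0$), the scalar $\Jac_{E^0}(P_{E^0}\cdot D\psi^\varepsilon(x))$ is precisely the derivative of a one-dimensional diffeomorphism of the line that is the identity near the endpoints; hence by Fubini, $\int\Jac_{E^0}(P_{E^0}\cdot A)\,d\mu_\varepsilon(A)=1$ exactly, for every $\varepsilon$. Since the Jacobian equals $\cos(2\pi\varepsilon)<1$ on the inner ball $\{|x|<1/2\}$ (where $\alpha\equiv 1$ and $D\psi^\varepsilon=R_\varepsilon$), the integrand is non-constant, and strict Jensen gives $\int\log\Jac<\log\int\Jac=0$, i.e. $c(\varepsilon)>0$. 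No expansion, no cancellation to chase; the argument even works for all $\varepsilon$ up to the point where $\psi^\varepsilon$ stops being a diffeomorphism along $g_0$-lines. Your approach gives less (only $\varepsilon$ small) for considerably more work, but it does isolate the quadratic coefficient $c(\varepsilon)\sim c_2\varepsilon^2$, which has some independent interest.

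That said, there are two concrete gaps in your sketch as written. First, you assert that $b^0$ is ``a multiple of a nonzero symmetric operator,'' which is what you use to conclude $\tr((b^0)^2)\geq 0$. This is false: $b^0(x)=P_{E^0}B(x)|_{E^0}$ is a rank-one operator of the form $v\mapsto 2\pi\,D\alpha(x)(v)\,P_{E^0}Jx$ (the $P_{E^0}J|_{E^0}$ piece vanishes because $J$ is skew and $G\cap E^0$ is a line), and rank-one operators are generically not symmetric. The correct reason $\tr((b^0)^2)\geq 0$ is the rank-one identity $\tr(M^2)=(\tr M)^2$, which gives $\tr((b^0)^2)=(2\pi\langle P_{E^0}Jx,\nabla\alpha(x)\rangle)^2$; this quantity is indeed positive on a set of positive measure, since $P_{E^0}Jx$ is a multiple of $g_0$ with coefficient proportional to $\langle g_-,x\rangle$, and the $G$-radial part of $\nabla\alpha$ is nonzero on the transition shell. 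Second, and more seriously, the claim that the second-order coefficient reduces to $-\tfrac12\int\tr((b^0)^2)\,dm$ requires $\int\tr c^0\,dm=0$, which you explicitly flag as ``the main obstacle I anticipate'' without resolving it. This is not automatic: $\tr c^0$ has two pieces of opposite sign (one from $\tfrac12(2\pi\alpha)^2 P_{E^0}J^2|_{E^0}$, which is strictly negative, and one from the second-order shear term involving $\alpha\lambda\langle g_0,x\rangle^2$, which has the opposite sign since $\lambda\leq 0$), and one needs an integration by parts identity of the form $\int\alpha\lambda\langle g_0,x\rangle^2\,dm=-\tfrac12\int\alpha^2\,dm$ to see that they cancel exactly. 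This cancellation does hold, so your plan can be completed, but it is precisely the nontrivial computation you deferred. The paper's Jensen argument sidesteps all of this.
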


\begin{proof}
Observe that for any $x_0 \in G^\perp$, $x \mapsto P_{E^0} \cdot
\psi^\varepsilon(x_0+x)$ defines a diffeomorphism of {\color{black} $G$} that is
the identity outside the ball of radius {\color{black} $\max(0,(1-|x_0|^2)^{1/2})$}.  In particular,
Fubini's theorem implies
\begin{equation}
\begin{split}
\int_{\color{black} SL(d,\RR)} \Jac_{E^0} &(P_{E^0} \cdot A) d\mu_\varepsilon(A)=
\int_{\color{black} B} \Jac_{E^0} (P_{E^0} \cdot D\psi^\varepsilon(z)) dm(z)\\
&={\color{black} \int_{G^\perp} \int_G \Jac_{E^0} (P_{E^0} \cdot D\psi^\varepsilon(x_0+x)) \;dx\;dx_0=1.}
\end{split}
\end{equation}
Observe also that for
$|x|<1/2$ we have $\Jac_{E^0}(P_{E^0} \cdot D\psi^\varepsilon(x))=\cos(2 \pi \varepsilon)<1$.
Thus $c(\varepsilon)>0$ follows from Jensen's inequality:
{\color{black}
$$-\int \log \Jac_{E^0} (P_{E^0} \cdot A) d\mu_\varepsilon(A)> -\log \bigg(\int_{\color{black} SL(d,\RR)} \Jac_{E^0} (P_{E^0} \cdot A) d\mu_\varepsilon(A)\bigg) = 0.$$
}
\end{proof}
\medskip

\noindent
\emph{Random composition of elementary perturbations.}
By the Law of Large Numbers,
the effect of an elementary perturbation composed along most random sequences of points of the unit ball is the same as the average effect of a single elementary perturbation.

\begin{proposition} \label{p.3.6}
If $\varepsilon>0$ is small, there exists $\lambda\in (0, 1/4)$
such that for every $\theta>0$ there exist $R_0 \in \N$
{\color{black} and for each $R\geq R_0$
a compact set $W_R \subset \SL(d,\R)^R$ with
$\mu_\varepsilon^{\otimes R}(\SL(d,\R)^R\setminus W_R)<\theta$}
with the following property.
Let $R \geq R_0$ and let $L_j:\R^d \to \R^d$, $0 \leq j \leq R-1$, be
invertible linear operators preserving $E^+$, $E^0$ and $E^-$ such that
$$\|L_j|E^0\| \cdot \|L_j^{-1}|E^+\|\leq \lambda\quad \text{and} \quad
\|L_j|E^-\| \cdot \|L_j^{-1}|E^0\| \leq \lambda.$$
Then
$$
\log \Jac_F\big((L_{R-1} \cdot A_{R-1}) \cdots (L_1\cdot A_1)\cdot (L_0 \cdot A_0)\big) < \sum_{j=0}^{R-1}
\log \Jac_{E^0}(L_j)-\frac {c(\varepsilon)} {2} R,
$$
for every $(A_0,\dots A_{\color{black} R-1})\in W_R$ and
for every $d_0$-dimensional subspace $F$ such that $\|P_{E^-}|F\| \leq 1/2$ and
$\|P_{E^+}|(L_{R-1} \cdot A_{R-1}
\cdots L_0\cdot A_0) \cdot F\| \leq 1/2$.

\end{proposition}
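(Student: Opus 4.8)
Set $\Pi:=(L_{R-1}A_{R-1})\cdots(L_0A_0)$, $F_0:=F$, $F_{j+1}:=L_jA_jF_j$, so that $F_R=\Pi F$ and
\[
\log\Jac_F(\Pi)=\sum_{j=0}^{R-1}\log\Jac_{F_j}(L_jA_j).
\]
The plan is to split the estimate into a deterministic graph–transform part, valid for all admissible $(A_j)$, which replaces each factor $\Jac_{F_j}(L_jA_j)$ by $\Jac_{E^0}(L_j)\cdot\Jac_{E^0}(P_{E^0}A_j)$ up to a controlled error, and a Law of Large Numbers applied to the i.i.d.\ variables $\log\Jac_{E^0}(P_{E^0}A_j)$, whose common $\mu_\varepsilon$-mean is exactly $-c(\varepsilon)<0$ by the preceding lemma.

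\textbf{Cone trapping and decay of graphs.} The two hypotheses on $F$ say that $F_0$ lies in the cone $\mathcal C^-=\{V:\ \|P_{E^-}|V\|\le 1/2\}$ and $F_R$ in $\mathcal C^+=\{V:\ \|P_{E^+}|V\|\le 1/2\}$. Using only the two domination inequalities (with $\lambda<1/4$) and $\|A_j-\mathrm{Id}\|=O(\varepsilon)$, I would check that each $L_jA_j$ sends $\mathcal C^-$ into itself and each $(L_jA_j)^{-1}$ sends $\mathcal C^+$ into itself; for $\mathcal C^-$ one separates the case where $V$ is already tilted toward $E^+$ (where $L_j$, expanding $E^+$ strictly the most, drives $V$ further from $E^-$) from the case where $V$ keeps a definite $E^0$-component (where $\|L_j|E^-\|\le\lambda\,\sigma_{\min}(L_j|E^0)$ suffices), and $\mathcal C^+$ is symmetric. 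Hence every $F_j\in\mathcal C^-\cap\mathcal C^+$, so $|P_{E^0}v|\ge |v|/\sqrt2$ for $v\in F_j$, and since $F$ is $d_0$-dimensional with $d_0=\dim E^0$, each $F_j$ is the graph of a map $\phi_j=\phi_j^++\phi_j^-\colon E^0\to E^+\oplus E^-$ with $\|\phi_j\|$ bounded by a universal constant. Moreover, because $L_j$ preserves the splitting, the graph transform by $L_j$ multiplies the $E^-$-component by $\le\|L_j|E^-\|/\sigma_{\min}(L_j|E^0)\le\lambda$ and, backward, multiplies the $E^+$-component by $\le\|L_j^{-1}|E^+\|\cdot\|L_j|E^0\|\le\lambda$. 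The key structural point is that $\psi^\varepsilon$ preserves the subspace $E^0\oplus E^-$ (both $R_\theta$ and the extra derivative term, which lies in $G\subset E^0\oplus E^-$, do), hence so does $A_j=D\psi^\varepsilon(x_j)$; therefore $A_j$ introduces \emph{no} source for $\phi^+$, and only an additive source $O(\varepsilon)$ for $\phi^-$ (the rotation in $G$ tilts $E^0$ into $E^-$ by an angle $\le 2\pi\varepsilon$). Iterating forward for $\phi^-$ and backward for $\phi^+$ gives $\|\phi_j\|\le C(\lambda^{\,j}+\lambda^{\,R-j})+C\lambda\varepsilon$, so $\sum_{j=0}^{R}\|\phi_j\|\le C'+C'\lambda\varepsilon R$.

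\textbf{Per-step Jacobian comparison.} Let $\pi_j:=P_{E^0}|_{F_j}\colon F_j\to E^0$ (a near-isometry) and $M_j:=\pi_{j+1}\circ L_jA_j\circ\pi_j^{-1}\colon E^0\to E^0$. Then $\log\Jac_{F_j}(L_jA_j)=\log\Jac(\pi_j)-\log\Jac(\pi_{j+1})+\log|\det M_j|$, where the endpoint terms $\log\Jac(\pi_0),\log\Jac(\pi_R)$ are $O(1)$ and the intermediate ones telescope. The algebraic heart is that $P_{E^0}\circ L_j$ annihilates $E^+\oplus E^-$ and $P_{E^0}\circ L_jA_j|_{E^0}=(L_j|_{E^0})\circ(P_{E^0}A_j|_{E^0})$; expanding $M_j$ on the graph of $F_j$ therefore gives $\det M_j=\Jac_{E^0}(L_j)\cdot\Jac_{E^0}(P_{E^0}A_j)\cdot\det(\mathrm{Id}+R_j)$ with $\|R_j\|\le C\varepsilon\|\phi_j\|$, the only surviving error coming from $(A_j-\mathrm{Id})\phi_j$. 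Summing and using the previous step,
\[
\log\Jac_F(\Pi)\le \sum_{j=0}^{R-1}\log\Jac_{E^0}(L_j)+\sum_{j=0}^{R-1}\log\Jac_{E^0}(P_{E^0}A_j)+D+D\lambda\varepsilon^2 R,
\]
with $D$ and all constants universal.

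\textbf{Randomization and conclusion.} Fix $\varepsilon>0$ small enough that $c(\varepsilon)>0$, then choose $\lambda\in(0,1/4)$ small enough (depending only on $\varepsilon$) that $D\lambda\varepsilon^2<c(\varepsilon)/8$. The functions $A\mapsto\log\Jac_{E^0}(P_{E^0}A)$ are bounded on the compact set $\supp\mu_\varepsilon$ and have $\mu_\varepsilon$-mean $-c(\varepsilon)$, so by Chebyshev's inequality applied to the i.i.d.\ sum, for every $\theta>0$ and every $R\ge R_0(\theta,\varepsilon)$ there is a closed (hence, inside $(\supp\mu_\varepsilon)^R$, compact) set $W\subset\SL(d,\RR)^R$ with $\mu_\varepsilon^{\otimes R}(\SL(d,\RR)^R\setminus W)<\theta$ on which $\sum_j\log\Jac_{E^0}(P_{E^0}A_j)\le -\tfrac34 c(\varepsilon)R$. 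Enlarging $R_0$ so that $D<c(\varepsilon)R/8$ for $R\ge R_0$, the displayed inequality yields, for $(A_0,\dots,A_{R-1})\in W$ and for every $d_0$-subspace $F$ as in the statement, $\log\Jac_F(\Pi)<\sum_j\log\Jac_{E^0}(L_j)-\tfrac12 c(\varepsilon)R$, as asserted. The main obstacle is the interplay between the Jacobian comparison and this budget: a crude bound on $\log\Jac_{F_j}(L_jA_j)-\log\Jac_{E^0}(L_j)$ is only $O(\varepsilon)$ per step, which is useless since $c(\varepsilon)$ is itself of order $\varepsilon^2$; one must exploit that $L_j$ preserves the splitting to cut the per-step error down to $O(\varepsilon\|\phi_j\|)$, and then use domination (together with the invariance of $E^0\oplus E^-$ under $\psi^\varepsilon$) to force $\|\phi_j\|=O(\lambda\varepsilon)$ along the bulk of the orbit, so that taking $\lambda$ small renders the total discrepancy negligible relative to $c(\varepsilon)R$. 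The cone trapping and the Chebyshev step are then routine.
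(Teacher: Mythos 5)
Your proof is correct, and it reaches the same conclusion by the same overall strategy as the paper: trap the subspaces $F_j$ in a cone around $E^0$, compare $\log\Jac_{F_j}(L_jA_j)$ to $\log\Jac_{E^0}(L_j)+\log\Jac_{E^0}(P_{E^0}A_j)$ up to a controlled error, and finish with the Law of Large Numbers applied to the i.i.d.\ terms $\log\Jac_{E^0}(P_{E^0}A_j)$ of mean $-c(\varepsilon)$. Where you differ is in the per-step Jacobian comparison. The paper's Lemma~\ref{3.5}(3) gives the clean bound
$\log\Jac_F(LA)<\log\Jac_{E^0}(L)+\log\Jac_{E^0}(P_{E^0}A)+C(\lambda+\delta)$,
so the total error is $2CR\lambda+C$, and they simply take $\lambda\leq c(\varepsilon)/(10C)$, $R\geq 10C/c(\varepsilon)$. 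You instead introduce the graph coordinates $\phi_j$, observe (correctly, and this is exactly the remark ``$A\cdot(E^0+G)=E^0+G$'' made in the paper right after defining $\mu_\varepsilon$) that $A_j$ preserves $E^0\oplus E^-$ so that $\phi^+$ has no source term, derive the decay $\|\phi_j\|\lesssim\lambda^j+\lambda^{R-j}+\lambda\varepsilon$, and get the sharper per-step error $O(\varepsilon\|\phi_j\|)$ via the telescoping decomposition through $\pi_j=P_{E^0}|_{F_j}$. Your total error $D+D\lambda\varepsilon^2R$ is indeed smaller, and it lets you choose $\lambda$ independently of the size of $c(\varepsilon)$. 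That is a genuine (if minor) improvement in the $\lambda$–$\varepsilon$ dependence.

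One remark about your closing paragraph: the claim that a cruder per-step bound ``is useless since $c(\varepsilon)$ is itself of order $\varepsilon^2$'' and that ``one must'' exploit the finer $O(\varepsilon\|\phi_j\|)$ estimate is not right. The quantifiers in the statement let $\lambda$ depend on $\varepsilon$; the paper's coarser $O(\lambda)$ per-step error is perfectly adequate once one takes $\lambda\leq c(\varepsilon)/(10C)$, which the proposition permits. Your finer bookkeeping buys a milder constraint on $\lambda$, but it is a convenience, not a necessity. A second, small imprecision: you write that $L_jA_j$ ``sends $\mathcal C^-$ into itself,'' but the proof actually needs (and your graph-transform estimate in fact gives) the strictly stronger fact that the aperture contracts from $1/2$ to roughly $\lambda$; this is the content of Lemma~\ref{3.5}(1)-(2), and it is what makes the telescoping endpoint terms $O(1)$ while the bulk terms are $O(\lambda\varepsilon)$.
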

\medskip

The proof will use the following lemma about dominated splittings.
\begin{lemma} \label {3.5}

There exists $C>0$ such that if $\varepsilon>0$ is sufficiently small, then
the following holds.
Let $L:\R^d \to \R^d$ be an invertible linear
operator that preserves each of $E^+$, $E^0$ and $E^-$, and assume that
for some $\lambda\in (0,1/4)$ we have
\begin{equation}\label{e.norm}
\|L|E^0\| \cdot\|L^{-1}|E^+\|\leq \lambda \quad \text{and} \quad
\|L|E^-\| \cdot\|L^{-1}|E^0\|  \leq \lambda.
\end{equation}
Let $A \in \supp \mu_\varepsilon$ and let
$F \subset \R^d$ be a $d_0$-dimensional subspace.  Then
\eqref{e.norm} implies:
\begin{enumerate}
\item[1.] if $\|P_{E^-}|F\| \leq 1/2$ then
$\|P_{E^-}|(L \cdot A) \cdot F\| \leq \lambda$;
\item[2.] if $\|P_{E^+}|(L \cdot A) \cdot F\| \leq 1/2$ then
$\|P_{E^+}|F\| \leq \lambda$; and
\item[3.] if
$\|P_{E^-}|F\|,\|P_{E^+}|(L \cdot A) \cdot F\| \leq \gamma$, for some $\gamma\in(0,1/2)$, then
$$\log \Jac_F(L \cdot A)<\log \Jac_{E^0}(L)+
\log \Jac_{E^0}(P_{E^0} \cdot A)+C (\lambda+\gamma).$$
\end{enumerate}

\end{lemma}

\begin{proof}

If $v \in \R^d$
is a unit vector with $\|P_{E^-} \cdot v\|^2 \leq 1/2$, then
{\color{black} $\|P_{E^-} \cdot v\|\leq \|P_{E^+\oplus E^-} \cdot v\|$. With~\eqref{e.norm} this gives}
$$
\|(P_{E^-} \cdot L) \cdot v\|=
\|(L \cdot P_{E^-}) \cdot v\| \leq
\lambda\, \|(L \cdot P_{E^+ \oplus E^0}) \cdot v\|=
\lambda \,\|(P_{E^+ \oplus E^0} \cdot L) \cdot v\|.
$$
Since $\varepsilon>0$ is small, $\|P_{E^-}|F\| \leq
1/2$ implies $\|P_{E^-}|A \cdot F\|^2 \leq 1/2$.
The first estimate follows.

Symmetrically if $v \in \R^d$
is a unit vector with $\|P_{E^+} \cdot v\|^2 \leq 1/2$, then
$$
\|(P_{E^+} \cdot L^{-1}) \cdot v\|=
\|(L^{-1} \cdot P_{E^+}) \cdot v\| \leq
{\lambda}\,
\|(L^{-1} \cdot P_{E^0 \oplus E^-}) \cdot v\|=
{\lambda}\,
\|(P_{E^0 \oplus E^-} \cdot L) \cdot v\|.
$$
Since $\varepsilon>0$ is small, $\|P_{E^+}|(L\cdot A) \cdot F\| \leq 1/2$
implies $\|P_{E^+}|L\cdot F\|^2 \leq 1/2$.
The second estimate follows.

For any unit vector $v \in \R^d$ such that $\|P_{E^-} \cdot v\|^2
\leq 1/2$ and $\|P_{E^+} \cdot L \cdot v\| \leq \gamma \|L \cdot v\|$,
\begin{equation*}
\begin{split}
\|L \cdot v-(P_{E^0} \cdot L) \cdot v\| & \leq
\|(P_{E^+} \cdot L) \cdot v\|+\|(P_{E^-} \cdot L) \cdot v\|\\
& \leq
\gamma \|L \cdot v\|+
\lambda \|(P_{E^+ \oplus E^0} \cdot L) \cdot v\| \leq
(\gamma + \lambda) \|L \cdot v\|.
\end{split}
\end{equation*}
Thus if $F \subset \R^d$ satisfies $\|P_{E^-}|F\| \leq 1/2$ (and hence
$\|P_{E^-}|A \cdot F\|^2 \leq 1/2$) and
$\|P_{E^+}|(L \cdot A) \cdot F\| \leq \gamma$, we can write
$L|A \cdot F$ as $S_F \cdot L
\cdot (P_{E^0}|A \cdot F)$,
where $S_F:E^0 \to \R^d$ is a linear map with
$\|S_F\| \leq (1-\gamma-\lambda)^{-1}$.  We conclude that
$$
\log \Jac_F(L \cdot A) \leq -d_0 \log (1-\gamma-\lambda)+
\log \Jac_{E^0}(L)+
\log \Jac_F(P_{E^0} \cdot A).
$$
On the other hand, the function $\log \Jac_F(P_{E^0} \cdot A)$ is
uniformly (on $A \in \supp \mu_\varepsilon$)
Lipschitz as a function of  those $F$ satisfying $\|P_{E^+ \oplus E^-}|A \cdot F\|
\leq 1/2$.  Thus
$$
|\log \Jac_F(P_{E^0} \cdot A)-\log \Jac_{E^0}(P_{E^0} \cdot A)| \leq C_0
\|P_{E^+ \oplus E^-}|F\|,
$$
for some $C_0>0$.  Since $\|P_{E^+ \oplus E^-}|F\| \leq
\|P_{E^-}|F\|+\|P_{E^+}|F\| \leq \gamma+\lambda$, the third estimate follows.
\end{proof}
\medskip

\begin{proof}[Proof of Proposition~\ref{p.3.6}]
Define $F_j$, $0 \leq j \leq R$ by $F_0=F$, $F_{j+1}=L_j \cdot A_j \cdot F$.
First notice $\|P_{E^+}|F_R\| \leq 1/2$ and $\|P_{E^-}|F_0\| \leq 1/2$
imply, by iterated application of estimates (1-2) in
the previous lemma, that $\|P_{E^+}|F_j\| \leq \lambda$ for $0 \leq j \leq
R-1$, while $\|P_{E^-}|F_j\| \leq \lambda$ for $1 \leq j \leq R$.  By item
(3) in Lemma~\ref {3.5} we get that $\log \Jac_{F_j}(L_j \cdot A_j)-
(\log \Jac_{E^0}(L_j)+\log \Jac_{E^0}(P_{E^0} \cdot A_j))$ is at most $2 C
\lambda$ if $1 \leq j \leq R-2$, and at most $C \lambda+\frac {C} {2}$ for
$j=0$ or $j=R-1$.  It follows that
\begin{align*}
\log \Jac_F((L_{R-1} \cdot &A_{R-1}) \cdots (L_0 \cdot A_0)) \leq\\
&\sum_{j=0}^{R-1} \log \Jac_{E^0}(L_j) 
\nonumber
+
\sum_{j=0}^{R-1} \Jac_{E^0}(P_{E^0} \cdot A_j)+2C R \lambda+C.
\end{align*}
If $0<\lambda \leq (10 C)^{-1} c(\varepsilon)$ and $R \geq 10 C c(\varepsilon)^{-1}$,
this gives
$$
\log \Jac_F((L_{R-1} \cdot A_{R-1} )\cdots (L_0 \cdot A_0)) \leq \sum_{j=0}^{R-1}
\log \Jac_{E^0}(L_j)+\sum_{j=0}^{R-1} \Jac_{E^0}(P_{E^0} \cdot A_j)+
\frac {3 c(\varepsilon)}{10}R.
$$

Recalling the definition~\eqref{e.c-eps} of $c(\varepsilon)$,
the Law of Large Numbers implies that for every $\theta>0$,
if $R$ is sufficiently large, the probability,
with respect to $\mu_\varepsilon^{\otimes R}$, that
$$
\frac {1} {R} \sum_{j=0}^{R-1} \Jac_{E^0}(P_{E^0} \cdot A_j) \geq
-\frac {4c(\varepsilon)} {5}
$$
is less than $\theta$.  The result follows.
\end{proof}

\subsection{Local}
In the second step, we explain how {\color{black} to} perturb along an orbit.

\begin{proposition} \label {p.3.8}
If $\varepsilon>0$ is small, there exists $\lambda\in (0, 1/4)$
such that for every $\theta>0$ there exists $R_0 \in \N$ with the following property.
Let $R \geq R_0$,
$N \geq R$, and let $f_j:(\R^d,0) \to (\R^d,0)$, $0 \leq j \leq N-1$,
be germs of volume-preserving diffeomorphisms such that
the $L_j=D f_j(0)$ preserve $E^+$,
$E^0$ and $E^-$,
and such that
$$\|L_j|E^0\| \cdot\|L_j^{-1}|E^+\|\leq \lambda \quad \text{and} \quad
\|L_j|E^-\| \cdot\|L_j^{-1}|E^0\| \leq \lambda.$$

Then for every small neighborhood $U$ of $0\in \RR^d$, and $0\leq j\leq N-1$, 
there exist measurable subsets $Z_j$ of $U_j:=f_{j-1}\circ\cdots\circ f_0(U)$,
smooth volume-preserving diffeomorphisms $\varphi_j:\R^d \to \R^d$
and perturbations $\tilde f_j:=f_j\circ \varphi_j$ such that:
\begin{itemize}
\item $m(Z_j) \geq (1-2\theta) m(U_j)$,
\item $\varphi_j$ coincides with $\id$ outside $U_j$ and $D
\varphi_j(x) \in \supp \mu_\varepsilon$ for every $x \in \R^d$,
\item for any $0\leq j\leq N-R$, any $y\in Z_j$ and
any $d_0$-dimensional space $F$ satisfying $\|P_{E^-}|F\| \leq 1/3$
and $\|P_{E^+}|D(\tilde f_{j+R-1}\circ\dots\circ \tilde f_j)(y)\cdot F\| \leq 1/3$,
we have:
$$
\log \Jac_{F}(\tilde f_{j+R-1}\circ\dots\circ \tilde f_j,y) \leq
 \Jac_{E^0}(L_{j+R-1}\circ \dots\circ L_j)-\frac {c(\varepsilon)} {3}R .
$$
\end{itemize}
\end{proposition}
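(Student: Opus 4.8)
The strategy is to \emph{localize} Proposition~\ref{p.3.6}: the random composition of elementary perturbations there will be realized as the sequence of derivatives read along the $\tilde f$-orbit of a typical point, the randomness coming from tiling each $U_i$ by extremely small balls and planting a rescaled copy of $\psi^\varepsilon$ in each. So I would first take $\lambda\in(0,1/4)$ to be the constant given by Proposition~\ref{p.3.6}, and, given $\theta>0$, take $R_0$ to be the integer it produces (both depend only on $\varepsilon$ and $\theta$). Fix $R\geq R_0$, $N\geq R$, and germs $f_j$ with $L_j=Df_j(0)$ satisfying the domination hypotheses. The neighborhood $U$ of $0$ will be chosen \emph{very} small --- small enough that the finitely many sets $U_i=f_{i-1}\circ\dots\circ f_0(U)$ all lie in a tiny ball about $0$ and each $f_i$ is $C^1$-close on $U_i$ to its linear part $L_i$ --- and then a decreasing scale hierarchy $\rho_0\gg\rho_1\gg\dots\gg\rho_{N-1}$ will be fixed, each ratio $\rho_{i+1}/\rho_i$ taken as small as the later estimates demand. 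Tile $U_i$, up to a set of arbitrarily small relative measure, by pairwise-disjoint balls $B_{i,k}$ of radius $\rho_i$ with centers $c_{i,k}$, and let $\varphi_i$ be the identity outside $\bigcup_k B_{i,k}$ and $x\mapsto c_{i,k}+\rho_i\,\psi^\varepsilon((x-c_{i,k})/\rho_i)$ on $B_{i,k}$. Choosing $\alpha$ so that it vanishes near $\partial B^d$ (hence $I\in\supp\mu_\varepsilon$), each $\varphi_i$ is a smooth volume preserving diffeomorphism, equals the identity outside $U_i$ (so $\varphi_i(U_i)=U_i$ and $\tilde f_i(U_i)=U_{i+1}$, whence $\tilde f$-orbits started in $U_j$ stay in the $U_i$), and $D\varphi_i(x)\in\supp\mu_\varepsilon$ for all $x$. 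Put $\tilde f_i=f_i\circ\varphi_i$.

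For $j\le N-R$ and $z\in U_j$ lying in a tiny ball, record its phase sequence $\Xi_j(z)=(\Phi_j(z),\Phi_{j+1}(\tilde f_jz),\dots,\Phi_{j+R-1}(\tilde f_{j+R-2}\circ\dots\circ\tilde f_jz))\in(B^d)^R$, where $\Phi_i(p)=(p-c_{i,k})/\rho_i$ when $p\in B_{i,k}$; write $A_i\in\supp\mu_\varepsilon$ for $D\psi^\varepsilon$ evaluated at the $i$-th component. Because each $f_i$ is $C^1$-close to $L_i$ on $U_i$ and the orbit stays there, $D(\tilde f_{j+R-1}\circ\dots\circ\tilde f_j)(z)$ differs from $(L_{j+R-1}A_{j+R-1})\cdots(L_jA_j)$ by an amount tending to $0$ as $\diam(U)\to0$; both are uniformly invertible, so the same holds for $\log\Jac_F$ of either and for the images of $F$ in the Grassmannian. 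The heart of the matter is an \textbf{equidistribution lemma}: if $z$ is distributed as normalized Lebesgue measure on $U_j$ (or on any union of tiny balls of $U_j$), then the law of $\Xi_j(z)$ converges, as the scale hierarchy is refined, to $\mathrm{Leb}_{B^d}^{\otimes R}$ in the weak-$*$ topology (and likewise with $R$ replaced by any $R'\le R$). Granting this, let $W\subset(\supp\mu_\varepsilon)^R$ be the good set of Proposition~\ref{p.3.6} for $L_j,\dots,L_{j+R-1}$; in the proof of that proposition $W$ is \emph{open} --- the set where $\frac1R\sum_i\log\Jac_{E^0}(P_{E^0}A_i)<-\tfrac45 c(\varepsilon)$ --- and $\mu_\varepsilon^{\otimes R}(W^c)<\theta$. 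Set $Z_j=\{z\in U_j:(D\psi^\varepsilon)^{\times R}(\Xi_j(z))\in W\}$ (and $Z_j=U_j$ for $j>N-R$). Since $(D\psi^\varepsilon)^{-R}(W)$ is open with $\mathrm{Leb}_{B^d}^{\otimes R}$-complement $<\theta$, the portmanteau theorem together with the equidistribution lemma gives $m(Z_j)>(1-2\theta)m(U_j)$ once the hierarchy is fine enough --- finitely many $j$, so one choice works for all. Finally, for $z\in Z_j$ and $F$ with $\|P_{E^-}|F\|\le1/3$ and $\|P_{E^+}|D(\tilde f_{j+R-1}\circ\dots\circ\tilde f_j)(z)\cdot F\|\le1/3$, the $C^1$-error (shrunk below $c(\varepsilon)/6$) keeps the $\tfrac12$-cone hypotheses of Proposition~\ref{p.3.6} valid for $(L_{j+R-1}A_{j+R-1})\cdots(L_jA_j)$, so that proposition yields $\log\Jac_F((L_{j+R-1}A_{j+R-1})\cdots(L_jA_j))<\sum_{i=j}^{j+R-1}\log\Jac_{E^0}(L_i)-\tfrac12 c(\varepsilon)R$; since the $L_i$ preserve $E^0$ this sum equals $\log\Jac_{E^0}(L_{j+R-1}\circ\dots\circ L_j)$, and adding back the $C^1$-error and using $R\ge1$ gives $\log\Jac_F(\tilde f_{j+R-1}\circ\dots\circ\tilde f_j,z)\le\log\Jac_{E^0}(L_{j+R-1}\circ\dots\circ L_j)-\tfrac13 c(\varepsilon)$, as required.

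I expect the equidistribution lemma to be the main obstacle, and I would prove it by induction on the number of phase-steps. For one step, $\Phi_i$ carries each tiny ball affinely onto $B^d$ preserving normalized Lebesgue, so the pushforward is \emph{exactly} $\mathrm{Leb}_{B^d}$. For the inductive step the crucial point is that, since $f_i$ and $\psi^\varepsilon$ are volume preserving, the map $w\mapsto\tilde f_i(c_{i,k}+\rho_iw)$ from $B^d$ onto $\tilde f_i(B_{i,k})=f_i(B_{i,k})\subset U_{i+1}$ has \emph{constant} Jacobian $\rho_i^d$; hence it sends normalized Lebesgue on $B^d$ to normalized Lebesgue on the roundish region $f_i(B_{i,k})$, whose diameter $\asymp\rho_i$ dwarfs $\rho_{i+1}$. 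Consequently normalized Lebesgue on $f_i(B_{i,k})$ is within total variation $O(\rho_{i+1}/\rho_i)$ of normalized Lebesgue on a union of $\rho_{i+1}$-balls of $U_{i+1}$, where the inductive hypothesis applies; repeating this comparison over the $\tilde f_i$-images of small subcubes of $B^d$ decorrelates the first phase $\Phi_i$ from the remaining $R-1$, so the joint law converges to $\mathrm{Leb}_{B^d}\otimes\mathrm{Leb}_{B^d}^{\otimes(R-1)}$. Two points require care: weak-$*$ (not total-variation) convergence is the best possible, since for $R\geq2$ the pushforward is singular with respect to $\mathrm{Leb}_{B^d}^{\otimes R}$ --- which is precisely why the openness of $W$ was used above; and the quantifiers (the ratios $\rho_{i+1}/\rho_i$, the subcube scale, $\diam(U)$), together with the control of orbit points landing in the small gaps between tiny balls, must be organized in the right order. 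This bookkeeping, rather than any single inequality, is where the real work lies.
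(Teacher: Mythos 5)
Your proof is correct and is essentially the paper's: the ``equidistribution lemma'' you formulate is precisely the paper's Lemma~\ref{3.7} (stated and proved just before Proposition~\ref{p.3.8}, by the same inductive tiling-at-decreasing-scales construction, with the constant-Jacobian observation playing the same role), and the rest---feeding the approximate law into the open good set $W$ from Proposition~\ref{p.3.6} and absorbing the $C^1$-error by shrinking $\diam(U)$---matches the paper's proof. The only cosmetic difference is that the paper establishes the weak-$*$ convergence in Lemma~\ref{3.7} by testing against Lipschitz functions, while you phrase the final step through the portmanteau theorem and the openness of $W$.
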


The proof of Proposition~\ref {p.3.8} uses the following lemma, which allows us to construct 
a sequence of perturbations along an orbit that act like random perturbations.
\begin{lemma} \label {3.7}

Consider a sequence $f_j:U_j \to U_{j+1}$, $0\leq j\leq N-1$,
of $C^1$ volume-preserving diffeomorphisms between bounded
open sets of $\R^d$
and $f^j=f_{j-1} \circ \cdots \circ f_0$.  Let $\psi_j$ be
volume-preserving diffeomorphisms of $\RR^d$ supported on the unit ball $B$. 
Let $\mu_j$ be the push-forward of normalized Lebesgue measure $m$ on $B$ under
the map $$B\ni x \mapsto D \psi_j(x) \in \SL(d,\R).$$

Then for any $\chi>0$ there exist {\color{black} orientation- and volume-}preserving diffeomorphisms
$\varphi_j$ of $\RR^d$ such that,
setting $\tilde f_j=f_j \circ \varphi_j$ and $\tilde f^j=\tilde f_{j-1} \circ
\cdots \circ \tilde f_0$, we have:
\begin{enumerate}
\item[1.] for $0 \leq j \leq N-1$, the diffeomorphism $\varphi_j$ is $\chi$-close to the identity
in the $C^0$-distance,
equals $\id$ outside $U_j$, and satisfies
$D \varphi_j(x) \in \supp \mu_j$ for each $x\in \RR^d$;
\item[2.] the push-forward of normalized Lebesgue measure $m$ on $U_0$
under the map
$$U_0 \ni x \mapsto (D \varphi_j(\tilde f^j(x)))_{j=0}^{N-1} \in \SL(d,\R)^N$$
is arbitrarily close to $\mu_0 \otimes \cdots \otimes \mu_{N-1}$.
\end{enumerate}

\end{lemma}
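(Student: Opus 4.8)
The plan is to realize each $\varphi_j$ as a \emph{mosaic} of rescaled copies of $\psi_j$ carried on a fine pairwise disjoint family of round balls filling up most of $U_j$, with the mosaic scales $\delta_0\gg\delta_1\gg\cdots\gg\delta_{N-1}$ fixed by a finite recursion, and then to deduce the asymptotic independence in (2) from an equidistribution-modulo-grid argument in the spirit of Bochi's randomization.

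Concretely, I would first pick a small $\delta_0>0$ and, by the Vitali covering lemma, a finite pairwise disjoint family of closed balls $p_k+r_kB\subset U_0$ with $r_k\le\delta_0$ whose union $\Omega_0$ is compactly contained in $U_0$ and has complement in $U_0$ of arbitrarily small measure; set $\varphi_0=\id$ off $\Omega_0$ and $\varphi_0(x)=p_k+r_k\psi_0(r_k^{-1}(x-p_k))$ on $p_k+r_kB$. Arranging, as we may, that $\psi_0=\id$ on an annulus inside $\partial B$ (so that $I\in\supp\mu_0$ and the pieces glue smoothly), $\varphi_0$ is a smooth volume-preserving diffeomorphism of $\R^d$, equal to $\id$ off $U_0$, with $\|\varphi_0-\id\|_{C^0}\le\delta_0\|\psi_0-\id\|_{C^0}$ and $D\varphi_0(x)\in\supp\mu_0$ everywhere. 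This pins down the $C^1$ map $\tilde f^1=f_0\circ\varphi_0$; I would then choose $\delta_1$ much smaller than $\delta_0$ — small enough in terms of the moduli of continuity of $Df_0$ and $D\tilde f^1$ on fixed compact sets — build $\varphi_1$ the same way inside $U_1$, then choose $\delta_2\ll\delta_1$ once $\tilde f^2$ is determined, and so on. As $N$ is fixed this recursion terminates and delivers property (1).

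For (2) it is convenient to reformulate. On the (large measure) set where $\tilde f^j(x)$ lies in the $j$-th mosaic, let $Y_j(x)\in B$ be the rescaled position of $\tilde f^j(x)$ inside its mosaic ball, so that $\Phi_j(x):=D\varphi_j(\tilde f^j(x))=D\psi_j(Y_j(x))$; since the pushforward in question is then the image of the law of $(Y_0,\dots,Y_{N-1})$ under the fixed map $(y_0,\dots,y_{N-1})\mapsto(D\psi_0(y_0),\dots,D\psi_{N-1}(y_{N-1}))$, it suffices to show that, for $x$ uniform on $U_0$, the law of $(Y_0,\dots,Y_{N-1})$ is arbitrarily close to the uniform measure on $B^N$. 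Here I would use that each $\tilde f_i\colon U_i\to U_{i+1}$ is volume preserving, so $\tilde f^j$ carries normalized Lebesgue on $U_0$ to normalized Lebesgue on $U_j$, together with an inductive ``conditional equidistribution'' estimate: conditioning on $(Y_0,\dots,Y_{j-1})$ to a fine but fixed resolution localizes $x$ to a union of balls of radius $\ll\delta_{j-1}$, each contained in the $\tilde f^{j-1}$-preimage of a single $\delta_{j-1}$-mosaic ball; on such a ball $\tilde f^j=f_{j-1}\circ\varphi_{j-1}\circ\tilde f^{j-1}$ is uniformly close to affine — the $\varphi_{j-1}$ factor because on one $\delta_{j-1}$-ball it is the \emph{smooth} map $p+\delta_{j-1}\psi_{j-1}(\cdot)$ seen at relative scale $\ll1$, and the $f_{j-1},\tilde f^{j-1}$ factors because their relevant domains have diameter made small by the earlier choice of $\delta_{j-1}$ — so $\tilde f^j$ sends the (nearly uniform) conditional law of $x$ to a nearly uniform measure on a $d$-dimensional ellipsoid in $U_j$ of diameter $\gg\delta_j$; resolving this modulo the $\delta_j$-mosaic then gives, by translation invariance at scales $\gg\delta_j$, that the conditional law of $Y_j$ is close to uniform on $B$, with an error uniform over the conditioning. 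Telescoping along $j=0,\dots,N-1$ yields $(Y_0,\dots,Y_{N-1})\approx$ uniform on $B^N$, hence (2).

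The main difficulty is not conceptual — it is the simultaneous quantitative bookkeeping. The scales are forced to be chosen in the order $\delta_0,\delta_1,\dots$, each $\delta_j$ only after $\varphi_0,\dots,\varphi_{j-1}$ and hence the fixed $C^1$ map $\tilde f^{j+1}$ are pinned down (note $\tilde f^{j+1}$ is \emph{not} $C^1$-close to $\tilde f^j$, since $D\varphi_j$ oscillates at scale $\delta_j$), while one keeps additive control of a chain of approximations: the Vitali mosaic omits only $\eta$-measure; $f_i$ and $\tilde f^i$ are affine to first order on balls of diameter made small; nearly affine maps nearly preserve approximate uniformity; an approximately uniform measure on a $d$-dimensional set of diameter $\gg\delta$ equidistributes modulo a $\delta$-mosaic. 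The Lipschitz constants of the $\varphi_i$ and of $D\varphi_i$ are bounded in terms of the $\psi_i$ alone, independently of the $\delta_i$, which is what makes the recursion close up. Organizing the argument as an induction on $j$ with a hypothesis recording not only the near-uniformity of $(Y_0,\dots,Y_{j-1})$ but also that the conditional law of $\tilde f^j(x)$ stays spread out at scales $\gg\delta_j$ makes all the estimates line up, and beyond that I expect no further obstacle.
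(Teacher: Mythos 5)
Your construction is essentially the paper's: each $\varphi_j$ is a mosaic of rescaled, translated copies of $\psi_j$ on a Vitali family of balls filling most of $U_j$, glued smoothly to the identity (possible since $\psi_j=\id$ near $\partial B$), with the scale $\delta_j$ chosen only after $\varphi_0,\dots,\varphi_{j-1}$ (hence $\tilde f^j$) are pinned down, and with $D\varphi_j\in\supp\mu_j$ everywhere because the conjugation by a homothety leaves the derivative unchanged. The genuine difference is how independence in conclusion (2) is established. The paper's proof is a clean induction on $N$: once $\varphi_0,\dots,\varphi_{N-2}$ are fixed, the continuous map $H_{N-1}(x)=(D\varphi_j(\tilde f^j x))_{j\le N-2}$ is determined, and the last-level balls $D^n_\ell\subset U_{N-1}$ are chosen \emph{adapted to it} — small enough that $H_{N-1}$ oscillates by at most $n^{-1}$ on each preimage $\hat D^n_\ell$. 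The factorization $\nu_{N,n}\to\nu_{N-1}\otimes\mu_{N-1}$ then drops out in two lines against Lipschitz test functions, using that normalized Lebesgue on $D^n_\ell$ pushes forward to $\mu_{N-1}$ under $D\varphi_{N-1,n}$. You instead fix all scales geometrically by Vitali and then prove conditional near-uniformity of $Y_j$ given $(Y_0,\dots,Y_{j-1})$ at a resolution $\eta$, telescoping over $j$. Morally these are the same observation — conditioning $(Y_0,\dots,Y_{j-1})$ to within $\eta$ forces $H_j$ to oscillate by $O(\eta)$, since $H_j$ is a Lipschitz function of the $Y_i$ — but the paper's packaging handles the quantifiers more economically: the last mosaic is refined by the single parameter $n\to\infty$ with $\varphi_0,\dots,\varphi_{N-2}$ frozen, so one never has to co-ordinate a conditioning resolution $\eta$ with all the earlier scales $\delta_j$ or track the near-affineness of the $\tilde f^j$ on pieces. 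Your caveat that "simultaneous quantitative bookkeeping" is the real work is exactly right, and the inductive last-ball-adapted-to-$H_{N-1}$ device is what dissolves it; with that said, your outline is sound and would close with the estimates you describe.
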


\begin{proof}

The proof is by induction on $N$.  For $N=0$ there is nothing to do.  Assume
it holds for $N-1$, and apply the result for the sequence
$(f_j)_{0\leq j\leq N-2}$, yielding the sequence $(\varphi_j)_{\color{black} 0 \leq j\leq N-2}$.  Define
$\tilde f_j$ and $\tilde f^j$ as before, and let
$\nu_{N-1}$ be the push-forward of normalized Lebesgue measure on
$U_0$ under
the map
$$H_{N-1}:U_0 \ni x \mapsto (D \varphi_j(\tilde f^j(x)))_{j=0}^{N-2} \in
\SL(d,\R)^{N-1},$$ so that $\nu_{N-1}$ is arbitrarily close to $\mu_0 \otimes \cdots \otimes
\mu_{N-2}$.

For $n \in \N$, let $\{D^n_\ell\}_\ell$ be a finite family  of disjoint
closed balls in $U_{N-1}$ {\color{black} chosen using the Vitali lemma} such that:
\begin{itemize}
\item $\diam (D^n_\ell) <n^{-1}$;
\item defining $\hat D^n_\ell\subset U_0$ by
$D^n_\ell=\tilde f^{N-1}(\hat D^n_\ell)$, we have:
$\sum_\ell m(\hat D^n_\ell) \geq (1-n^{-1})\,m(U_0)$;
\item if $x,y \in \hat D^n_\ell$
then $\|H_{N-1}(x)-H_{N-1}(y)\| \leq
n^{-1}$.
\end{itemize}

Let  $\xi_{n,\ell}$ be the conformal affine dilation that sends $B$ into  $D^n_\ell$.
Define $\varphi_{N-1,n}$ to be the
identity outside $\bigcup_\ell D^n_\ell$ and by
$$
\varphi_{N-1,n}(x)=\xi_{n,\ell}
\psi_{N-1}(\xi_{n,\ell}^{-1} x), \quad x \in D_\ell^n.
$$
Let $\nu_{N,n}$
be the push-forward of normalized Lebesgue measure on $U_0$ under
 $$H_{N,n}:U_0 \ni x \mapsto (H_{N-1}{\color{black} (x)},D \varphi_{N-1,n}(\tilde
f^{N-1}(x))) \in \SL(d,\R)^N.$$ 
The properties of the first item are immediate.
{\color{black} For instance $\diam (D^n_\ell) <n^{-1}$ above implies that, for $n$ large enough,
$\varphi_{N-1,n}$ is $C^0$-close to the identity.}

Since
$\nu_{N-1}$ is close to $\mu_0 \otimes \cdots \otimes \mu_{N-2}$,
it is enough to show that $\lim_{n \to \infty}
\nu_{N,n}=\nu_{N-1} \otimes \mu_{N-1}$ to
establish the second item.  Equivalently, we must show
that for a dense subset of compactly supported, continuous functions $\rho:
\SL(d,\R)^N \to \R$, we have
\be
\label {bla}
\lim_{n \to \infty} \int \rho \, d\nu_{N,n}=\int \rho \, d \nu_{N-1}\otimes d \mu_{N-1}.
\ee

Take $\rho$ to be Lipschitz with constant $C_\rho$.  Since $\diam(H_{N-1}(\hat D^n_\ell))
\leq n^{-1}$, the quantities
$$
\frac {1} {m(\hat D^n_\ell)}
\int_{\hat D^n_\ell} \rho(H_{N,n}(x)) dx,
$$
and
$$
\frac {1} {m(\hat D^n_\ell)^2} \int_{\hat D^n_\ell}
\int_{\hat D^n_\ell} \rho(H_{N-1}(x),D \varphi_{N-1,n}(\tilde f^{N-1}(y)))\, dx \, dy
$$
differ by at most $C_\rho n^{-1}$.
By construction, for any {\color{black} $x\in \hat D_\ell^n$} we have
$$
\frac {1} {m(\hat D^n_\ell)}
\int_{\hat D^n_\ell}  \rho(H_{N-1}(x),D \varphi_{N-1}(\tilde f^{N-1}(y)))\, dy=
\int_{SL(d,\RR)} \rho(H_{N-1}(x),z) \, d\mu_{N-1}(z),
$$
so that
\begin{equation}\label {new}
\begin{split}
\left |\int_{\bigcup_\ell \hat D^n_\ell} \rho(H_{N,n}(x))\, dx-
\int \int_{\bigcup_\ell \hat D^n_\ell} \rho(H_{N-1}(x),z)\,
dx \, d\mu_{N-1}(z) \right |\\
\leq
 C_\rho {m\bigg(\bigcup_\ell \hat D^n_\ell\bigg)} n^{-1}.
\end{split}
\end{equation}
Clearly
$$
\left |\int \rho \, d\nu_{N,n}-
\frac {1} {m(U_0)} \int_{\bigcup_\ell \hat D^n_\ell} \rho(H_{N,n}(x)) \, dx \right |
\leq \|\rho\|_\infty n^{-1} \quad \text{and} 
$$
$$
\left |\int \rho\, d\nu_{N-1} \otimes d\mu_{N-1}-
\frac {1} {m(U_0)}\int \int_{\bigcup_l \hat D^n_\ell} \rho(H_{N-1}(x),z)\, dx\,
d\mu_{N-1}(z) \right | \leq
\|\rho\|_\infty n^{-1},
$$
so that (\ref {new}) implies (\ref {bla}).
\end{proof}
\bigskip

\begin{proof}[Proof of Proposition~\ref{p.3.8}]
Use Proposition~\ref {p.3.6} to select $\lambda$, $R_0$ and compact sets $W_R$.
Lemma \ref {3.7} applied with $\psi_j=\psi^\varepsilon$ gives the $\varphi_j$.
In particular, for every $0 \leq j \leq N-R$, there exists $Z_j \subset U_j$ with
$m(Z_j)>(1-2\theta) m(U_j)$ such that the {\color{black} image}  under
$${\color{black} U_j\ni x\mapsto (D \varphi_n(\tilde f^{n-j}(x)))_{n=j}^{j+R-1} \in \SL(d,\R)^R}$$
of the set $Z_j$ is arbitrarily close to $W_R$.
It follows that if $y$ is a point in $Z_j$
and if $F$ is a $d_0$-dimensional space satisfying
$\|P_{E^-}|F\| \leq 1/3$ and $\|P_{E^+}|F'\| \leq 1/3$
for $F'=(L_{j+R-1} \cdot A_{j+R-1}) \cdots (L_j \cdot A_j)\cdot F$,
then

$$
\log \Jac_{F}((L_{j+R-1} \cdot A_{j+R-1}) \cdots (L_j \cdot A_j)) \leq
\log \Jac_{E^0}(L_{j+R-1} \cdots L_j)-\frac {2 c(\varepsilon)} {5} R,
$$
where we denote $A_{j+i}=D \varphi_{j+i}(\tilde f_{j+i-1}\circ\dots\circ \tilde f_j(y_j))$.

Since the $f_{i}$ are diffeomorphisms, if the neighborhood $U$ is small enough,
$$
\log \Jac_{F}(D (\tilde f_{j+{\color{black} R}-1}\circ\dots\circ \tilde f_j)(y_j)) \leq
\log \Jac_{F}((L_{j+R-1} \cdot A_{j+R-1}) \cdots (L_j \cdot A_j))
+\frac {c(\varepsilon)} {20} {\color{black} R}.
$$
The result follows.
\end{proof}

\subsection{Global: proof of Theorem C'}
Using the local perturbation technique along orbits, we define in this third step the global
perturbation by building towers.

\begin{proof}[Proof of Theorem C']
Let $B_\xi\subset \RR^d$ be the ball centered at the origin of radius $\xi>0$ small.
Fix a precompact
family of volume-preserving smooth embeddings $\Psi_x:B_\xi \to M$, $x \in
K$, such that $\Psi_x(0)=x$ and $D\Psi_x(0)$ sends $E^+$, $E^0$, $E^-$
to  $E_1(x)$, $E_2(x)$ and $E_3(x)$, respectively.

Let $\alpha>0$ be small enough so that
(from the dominated splitting $T_KM=E_1\oplus E_2 \oplus E_3$) {\color{black} for all $x\in K$}, if $F$ is
$\alpha$-close to $E_2(x)$ then for each $j\geq 0$
the image $Df^j(x)\cdot F$ is close to a subspace of $E_1(f^j(x))\oplus E_2(f^j(x))$
and $Df^{-j}(x)\cdot F$ is close to a subspace of $E_2(f^{-j}(x))\oplus E_3(f^{-j}(x))$.
In particular for every $j \geq 0$,
$$\|P_{E^+}|
\big(D\Psi_{f^{-j}(x)}(0)^{-1} \cdot Df^{-j}(x)\big) \cdot F\|,\;
\|P_{E^-}|\big(D\Psi_{f^{j}(x)}(0)^{-1} \cdot Df^{j}(x)\big) \cdot F\|\;\leq\; 1/5.$$ 
If $\cU$ is small in the $C^1$-topology, for any $g\in\cU$
and $j\geq 0$ we still have:
\begin{itemize}
\item
if $g(x)$, $g^2(x)$, \dots, $g^j(x)$ are close enough to $f(x)$, $f^2(x)$, \dots, $f^j(x)$, then
$$\|P_{E^-}|
\big(D\Psi_{\color{black} g^j(x)}(0)^{-1} \cdot Dg^j(x)\big) \cdot F\|\; \leq 1/4,$$
\item
if $g^{-1}(x)$, \dots, $g^{-j}(x)$ are close enough to $f^{-1}(x)$, \dots, $f^{-j}(x)$, then
$$\|P_{E^+}|
\big(D\Psi_{\color{black} g^{-j}(x)}(0)^{-1} \cdot Dg^{-j}(x)\big) \cdot F\|\; \leq 1/4.$$
\end{itemize}

We choose $\varepsilon>0$ small (this choice depends on the neighborhood
$\cU$, see below) and apply Proposition~\ref{p.3.8} to get $\lambda$.
The dominated splitting gives  $J_0\in \N$ such that for $x \in K$, the map
$L_x=D\Psi_{f^{J_0}(x)}(0)^{-1} Df^{J_0}(x) D\Psi_x(0)$ satisfies
$$\|L_x|E^0\|\cdot \|L_x^{-1}|E^+\|\leq \lambda\quad 
\text{and}  \|L_x|E^-\|\cdot \|L_x^{-1}|E^0\|\leq \lambda.$$

We then fix $\delta<c(\varepsilon)/(3J_0)$.
Now take $\theta\in (0,\eta/10)$ and apply Proposition~\ref{p.3.8} to get $R_0$.
{\color{black} Next, fix $R$ much larger than $R_0$
(see the choice below) and set $r=R \cdot J_0$.
\medskip

Since $K$ has a dominated splitting, any periodic point $p\in K$
with period $k$ satisfies $Df^k(p)\neq \id$.
The Implicit Function Theorem implies
that the periodic points for $f$ in $K$ have measure $0$.
{\color{black} This implies that there exists a Rokhlin tower}, i.e. a measurable set $Z\subset K$
and a large integer $n_0\geq 1$ such that the iterates
$Z,f(Z),\dots, f^{n_0-1}(Z)$ are pairwise disjoint and
$\cup_{k=0}^{n_0-1} f^k(Z)$ has measure larger than $m(K)-\theta/2$.  {\color{black} Fix such a tower.}
Since $n_0$ is large, one can introduce $n:=N\cdot J_0$ with $N:=[n_0/J_0]$,
and by regularity of the measure, one can replace $Z$
by a compact subset $Y$, so that
$$m(K\setminus \bigcup_{k=0}^{n-1}f^k(Y))<\theta.$$

For each $x\in Y$,
 considers the sequence of diffeomorphisms
$$f_{j,x}:=\Psi_{f^{(j+1) J_0}(x)}^{-1} \circ f^{J_0} \circ
\Psi_{f^{j J_0}(x)},\quad\quad 0 \leq j \leq N-1,$$ and
a neighborhood $D_x$ (which is the image $\Psi_x(U_x)$
of some small neighborhood $U_x$ of $0$).
By compactness, one can find finitely many such points $x_{s} \in Y$, $s\in S$, and
reduce the associated neighborhoods $D_{s}:=D_{x_{s}}$, so that
the $f^k(D_{s})$, $s\in S$, $0\leq k<n$ are pairwise disjoint, and
$$m\bigg(K\setminus \bigcup_{s\in S} \bigcup_{0\leq k<n} f^k(D_{s})\bigg)<{\color{black} 2\theta}.$$
The domains $D_{s}$ may be chosen with small diameter so that
for each point $z\in K$ in an iterate $f^{jJ_0}(D_{s})$, $0\leq j\leq N-1$,
and for any {\color{black} $d_0$-dimensional affine subspace $F\subset \RR^d$,}
\begin{equation}\label{e.continuity}
\color{black} \|P_{E^-}| F\|\leq 1/4
\; \Rightarrow \;
\|P_{E^-}|
D\Psi_{f^{jJ_0}(x_{s})}(0)^{-1} \cdot  D\Psi_{z}(0) \cdot   F\|\leq 1/3,
\end{equation}
and
\begin{equation*}
\color{black} 
\|P_{E^+}| F\|\leq 1/4
\; \Rightarrow \;
\|P_{E^+}|
D\Psi_{f^{(j+R)J_0}(x_{s})}(0)^{-1} \cdot  D\Psi_{f^{RJ_0}(z)}(0) \cdot  F\|\leq 1/3.
\end{equation*}

{\color{black} Proposition~\ref{p.3.8} applied to $x_s$ and to $R,N$ gives
a sequence of diffeomorphisms
$\varphi_{j,s}$, and a sequence of sets $Z_{j,s}\subset f^{jJ_0}(D_s)$ such that
$m(Z_{j,s})\geq (1-2\theta)m(D_{s})$.}
Define the diffeomorphism $\varphi$
in each $f^{j J_0}(D_{s})$, $0 \leq j \leq N-1$ by
$$
\varphi=\Psi_{f^{j J_0}(x_{s})} \circ
\varphi_{j,s} \circ
\Psi_{f^{j J_0}(x_{s})}^{-1},
$$
and let $\varphi=\id$ otherwise.
It is clear that if the neighborhoods ${\color{black} D_{s}}$ are chosen small enough, then $\varphi$ is
arbitrarily close to the identity in the {\color{black}  $C^0$ topology}. 
Also, if $\varepsilon$ is small enough then $\varphi$ is close to the identity
in the {\color{black}  $C^1$ topology}. We set $g=f\circ \varphi$.

Define the set $\Lambda$ to be the set of all points $y$ belonging
to some $f^k(D_{s})$, with $0 \leq k \leq (N-1)J_0-r$, such that
$f^{jJ_0-k}(y) \in Z_{j,s}$, where $j=[k/J_0]+1$.
Hence
$$k\leq jJ_0\leq (j+R)J_0 \leq k+r.$$
Clearly, if $n$ is large and since $10\theta<\eta$,
we have $m(K \setminus \Lambda)<\eta$.
\medskip

Now consider $y\in \Lambda\cap K\cap g^{-r}(K)$
and a $d_0$-dimensional subspace $F\subset T_yM$ that is $\alpha$-close to $E_2(f,y)$
and whose image $Dg^r\cdot F$ is $\alpha$-close to $E_2(f,g^r(y))$.
We also introduce $j,k,x_{s}$ as defined above such that $f^{jJ_0-k}(y)$ belongs to
$Z_{j,s}$.
Since $k-jJ_0$ and $(j+R)J_0-(k+r)$ are bounded (by $2J_0$) and $g$
can be chosen arbitrarily close to $f$ in the $C^1$-topology,
by the choice of $\alpha$ we have
$$\|P_{E^-}|
D\Psi_{f^{jJ_0-k}(y)}(0)^{-1} \cdot D^{jJ_0-k}g(y) \cdot F\|\leq 1/4,$$
$$\|P_{E^+}|D\Psi_{f^{(j+R)J_0-k}(g^r(y))}(0)^{-1} \cdot Dg^{(j+R)J_0-k}(y) \cdot F\|\leq 1/4.$$
By~\eqref{e.continuity}, this gives:
$$\|P_{E^-}|
D\Psi_{f^{jJ_0}(x_{s})}(0)^{-1} \cdot D^{jJ_0-k}g(y) \cdot F\|\leq 1/3,$$
$$\|P_{E^+}|D\Psi_{f^{(j+R)J_0}(x_{s})}(0)^{-1} \cdot Dg^{(j+R)J_0-k}(y) \cdot F\|\leq 1/3.$$

Let $F'=D^{jJ_0-k}g(y)\cdot F$.
Since $f^{jJ_0-k}(y)$ belongs to
$Z_{j,s}$, by applying Proposition~\ref{p.3.8} we obtain:
$$
\log \Jac_{F'}(g^{RJ_0},g^{jJ_0-k}(y)) \leq
\log \Jac_{E_2(f, f^{jJ_0}(x_{s}))}(f^{RJ_0},f^{jJ_0}(x_{s}))-\frac {c(\varepsilon)} {3} R+4C_0,$$
where $C_0$ bounds $|\log\Jac_{H}(D\Psi_x)|$ for any $x\in K$ and any $d_0$-dimensional
space $H$.

If $g$ is sufficiently $C^0$-close to $f$,
and if the sets $D_{s}$ have small diameter,
then the orbits $(f^{-k}(y),\dots, f^{2n-k}(y))$
and $(x_{s},\dots, f^{2n}(x_{s}))$ are arbitrarily close.
It follows that there exists a constant $C_1>0$, which depends on $J_0$ but not
on $R$, such that:
$$
\log \Jac_{F}(g^{r},y) \leq
\log \Jac_{E_2(f, y)}(f^r,y)-\frac {c(\varepsilon)} {3J_0} r+4C_0+C_1.$$
If $r$ (and $R$) has been chosen large enough, one gets~\eqref{e.decay}
by our choice of $\delta$. This ends the proof of Theorem C'.}
\end{proof}


\section{Proof of the corollaries}

\subsection{Robust positive metric entropy}\label{ss.robustness}
We prove here Corollary 1.

For $m$-almost every point $x$,
we denote the Lyapunov exponents by 
$$\lambda_1(x)\geq\dots\geq \lambda_{\dim M}(x).$$
If $f$ has a (non-trivial) dominated splitting $TM=E\oplus F$, {\color{black} then by the Pesin-type inequality for $C^1$ diffeomorphisms with a dominated splitting proved in ~\cite{sun-tian}, we have:}
{$$h_m(f)\geq \int (\lambda_1(x)+\dots+\lambda_{\dim E}(x))\,d\Leb(x).$$
The dominated splitting also implies that there exists $a>0$
such that $\lambda_{\dim E}(x)>\lambda_{\dim E+1}(x) {\color{black} + a}$ for almost every point $x$.
In particular,
\[a+ \frac 1 {\dim F} \int (\lambda_{\dim E+1}+\dots+\lambda_{\dim M})\,d\Leb< \frac 1 {\dim E}\int (\lambda_1+\dots+\lambda_{\dim E})\,d\Leb.\]
Since $f$ is conservative,
$$\int (\lambda_1+\dots+\lambda_{\dim E})\,d\Leb(x)+ \int (\lambda_{\dim E+1}+\dots+\lambda_{\dim M})\,d\Leb =0.$$
All these estimate together imply that the metric entropy is positive:
$$h_m(f)\geq \int (\lambda_1+\dots+\lambda_{\dim E})\, d\Leb > \frac{a \; \dim E \; \dim F}{\dim M}>0.$$
\medskip

To prove the converse, assume that $f$ has no dominated splitting on $M$.
Then, the Theorem B implies that the generic diffeomorphism $g$
in the open set $\cU$ provided by the lemma below has zero metric entropy.
In particular $f$ is the limit of diffeomorphisms with zero metric entropy.

\begin{lemma}
If $f$ has no dominated splitting on $M$, then
there exists an open set $\cU\subset \Diff^1_\mathrm{vol}(M)$
of diffeomorphisms with no dominated splitting such that $f$ belongs to
the closure of $\cU$.
\end{lemma}
\begin{proof}
Fix $\varepsilon>0$.
There exists~\cite{BC} an arbitrarily $C^1$-small perturbation $f_1$
with a sequence of periodic orbits $O_n$ converging to $M$ in the Hausdorff topology.
Since $f_1$ is arbitrarily close to $f$, the dominated splittings that may exist on $O_n$,
for $n$ large, are weak: by~\cite{BochiBonatti} and the Franks lemma, for each $1\leq i< \dim M$, one can, after
 a $\varepsilon/2$-perturbation $f_2$ (with respect to the $C^1$-distance),
ensure that $O_n$ has simple eigenvalues and that the $i^\text{th}$ and the
$(i+1)^\text{th}$ eigenvalues are complex and conjugated. In particular,
any diffeomorphism $g$ that is $C^1$-close to $f_2$ has no dominated splitting
$E\oplus F$, with $\dim(E)=i$.
This last perturbation is supported on a small neighborhood of $O_n$.
Considering different periodic orbits, one can perform independently
$\dim M-1$ such perturbations and obtain a diffeomorphism which  robustly has
no dominated splitting, as required.
\end{proof}

\subsection{Weak mixing}\label{ss.weak-mixing}
We now prove Corollary 2.

Consider a diffeomorphism $f\in \Diff^r_\mathrm{vol}(M)$ with $r>1$.
For $\Leb$-almost every point $x$ we have introduced in Section~\ref{ss.oseledets} the splitting
$T_xM=E^+(x)\oplus E^0(x)\oplus E^-(x)$ induced by the Oseledets decomposition.
The Pesin stable manifold theorem asserts that {\color{black} if $x\in M$ is a regular point and $\varepsilon>0$ is small, then}
$$W^{-}(x):=\{z: \; \limsup_{n\to +\infty} \frac 1 n\log d(f^n(x),f^n(z))\leq -\varepsilon\}$$
is an injectively immersed submanifold tangent to $E^-(x)$.
Symmetrically, one obtains an injectively immersed submanifold $W^+(x)$ tangent to $E^+(x)$.

If $O$ is a hyperbolic periodic orbit, we define the \emph{Pesin homoclinic class}:

$$\phcs(O) = \{x\hbox{ Oseledets regular }: W^-(x)\transverse W^u(O) \neq \emptyset\},$$
$$\phcu(O) = \{x\hbox{ Oseledets regular }: W^+(x)\transverse W^s(O) \neq \emptyset\},$$
where $W_1\transverse W_2$ denotes the set of transverse intersections between manifolds $W_1,W_2$,
i.e. the set of points $x$ such that $T_xW_1+T_xW_2=T_xM$.
The {Pesin homoclinic class} is $\phc(O) := \phcs(O)\cap \phcu(O)$.
We stress the fact that $\phcs(O)$ can contain points $x$ whose stable dimension $\dim(E^-(x))$
is strictly larger than the stable dimension of $O$. However the set $\phc(O)$
only contains non-uniformly hyperbolic points whose stable/unstable dimensions are the same as $O$.

An improvement of Hopf argument gives:
\begin{theorem}[Rodriguez-Hertz\;-\;Rodriguez-Hertz\;-\;Tahzibi\;-\;Ures~\cite{RRTU}]\label{t.criterion}
Let $f\in \Diff^{r}_m(M)$ with $r>1$
and let $O$ be a hyperbolic periodic orbit such that $m(\phcs(O))$ and $m(\phcu(O))$
are positive.
Then $\phcs(O),\phcu(O),\phc(O)$ coincide $m$-almost everywhere
and $m|\phc(O)$ is  ergodic.  
\end{theorem}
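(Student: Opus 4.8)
The plan is to argue by a Hopf-type argument in the Pesin (non-uniformly hyperbolic) setting. The hypothesis $r>1$ is essential: it is what makes available the absolute continuity of the Pesin stable and unstable laminations $\{W^-(x)\}$, $\{W^+(x)\}$ and of the (uniformly hyperbolic) invariant manifolds of $O$, and this absolute continuity is the engine of the argument. First I would record the structural preliminaries. The sets $\phcs(O)$, $\phcu(O)$, $\phc(O)$ are $f$-invariant and Borel; $\phcs(O)$ is a union of whole Pesin stable manifolds and $\phcu(O)$ a union of whole Pesin unstable manifolds; and on $\phc(O)$ the stable and unstable dimensions equal those of $O$, so there the transverse intersections with $W^u(O)$ and $W^s(O)$ are isolated points. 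Two things must be proved: (A) that $m|_{\phc(O)}$ is ergodic once $m(\phc(O))>0$, and (B) that $\phcs(O)=\phcu(O)=\phc(O)$ modulo $m$. Claim (A) is essentially the smooth case of Katok's spectral decomposition (Theorem~\ref{t.spectral}): since $\phc(O)$ consists of non-uniformly hyperbolic points, $m|_{\phc(O)}$ is a hyperbolic measure; running the standard Hopf argument against the two absolutely continuous Pesin laminations inside Pesin--Lusin blocks shows that each Pesin homoclinic class carries a single ergodic component, so applying the decomposition to $m|_{\phc(O)}$ and this ergodicity of its pieces forces $m|_{\phc(O)}$ itself to be ergodic.

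The content of the theorem is then (B). The key step is that $\phcs(O)$, which is saturated by Pesin stable manifolds by definition, is modulo $m$ also saturated by Pesin unstable manifolds (and symmetrically $\phcu(O)$ by stable ones). I would prove this with the $\lambda$-lemma for the uniformly hyperbolic $O$: if $W^-(x)$ meets $W^u(O)$ transversally, then iterating backward the crossing occurs, at a uniform scale near $O$, robustly under $C^1$-small perturbations of the relevant local stable manifold; hence for $m$-a.e.\ point $y$ on the Pesin unstable manifold of $x$, the backward iterate $f^{-n}(y)$ lands in a Pesin--Lusin block on which local stable manifolds vary continuously and is close enough to $f^{-n}(x)$ that $W^-(y)$ inherits a transverse crossing with $W^u(O)$, i.e.\ $y\in\phcs(O)$. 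Being essentially saturated by both laminations, $\phcs(O)$ then lies modulo $m$ inside a single ergodic component, which by (A) coincides with some Pesin homoclinic class, and conversely that whole class lies in $\phcs(O)$ by ergodicity; that the reference periodic orbit of this class can be taken to be $O$ (so $\phcs(O)=\phc(O)$ mod $m$) follows by tracking $W^u(O)$ and $W^s(O)$, which the $\lambda$-lemma places in the closures of $\phcs(O)$ and $\phcu(O)$, forcing the reference orbits to agree and in particular $m(\phc(O))>0$. Intersecting the stable and unstable statements yields $\phcs(O)=\phcu(O)=\phc(O)$ modulo $m$, with $m|_{\phc(O)}$ ergodic by (A).

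I expect the main obstacle to be the saturation step in (B): propagating the transverse crossings with $W^u(O)$ and $W^s(O)$ through the Pesin laminations, whose dimension, size and $C^1$-geometry vary only measurably along the phase space. Concretely one must quantify how local Pesin stable manifolds vary along backward orbits and across Lusin blocks so that the single $\lambda$-lemma crossing near $O$ is genuinely inherited by a positive-$m$-measure family of stable manifolds, and check that the backward contraction along Pesin unstable manifolds beats the shrinkage of the Lusin block scale. This is precisely where the $C^{1+\alpha}$ absolute continuity of Pesin theory is indispensable, and the reason the statement is restricted to $r>1$.
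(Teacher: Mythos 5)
The paper does not contain a proof of this statement: it is quoted verbatim as a result of Rodriguez-Hertz--Rodriguez-Hertz--Tahzibi--Ures and cited from~\cite{RRTU}, so there is no in-paper argument to compare against. I will therefore assess your sketch against the logic of the known proof.

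Your overall skeleton is right: a Hopf argument using absolute continuity of the Pesin laminations is indeed the engine of the RRTU proof, and the hard work is exactly what you identify as step (B), showing that $\phcs(O)$ is essentially $u$-saturated. However, the mechanism you propose for this saturation has a genuine gap, and it stems from iterating in the wrong direction. You iterate backward: since $z\in W^-(x)\cap W^u(O)$ has $f^{-n}(z)\to O$, the $\lambda$-lemma at the uniformly hyperbolic $O$ makes $f^{-n}(W^-(x))$ near $f^{-n}(z)$ accumulate on $W^s_{\mathrm{loc}}(O)$, and then you hope to conclude that $W^-(y)$, for $y\in W^+(x)$, ``inherits'' the crossing because $f^{-n}(y)$ is close to $f^{-n}(x)$. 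But the proximity of local Pesin stable manifolds coming from a Lusin block is a statement at the scale of the block \emph{around} $f^{-n}(x)$ and $f^{-n}(y)$, whereas the robust crossing you have created sits near $O$ -- and as $n\to\infty$ the distance from $f^{-n}(x)$ to $f^{-n}(z)$ \emph{grows} (since $z\in W^-(x)$ means the orbits separate in backward time, with $f^{-n}(z)\to O$ while $f^{-n}(x)$ equidistributes). Moreover, $W^-(y)$ has no reason to contain any point whose backward orbit accumulates on $O$, so there is nothing forcing it to meet $W^u(O)$ near $O$. The closeness you invoke lives in the wrong place.

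The correct propagation runs \emph{forward}, via the nonuniform (Pesin) inclination lemma along the orbit of $x$ rather than the uniform $\lambda$-lemma at $O$. Since $z\in W^-(x)$, one has $f^n(z)\to f^n(x)$. Take a small disk $D\subset W^u(O)$ through $z$ transverse to $W^-(x)$; by the nonuniform $\lambda$-lemma, for $n$ in the (positive-density) set of times when $f^n(x)$ lies in a fixed Lusin block, $f^n(D)$ is $C^1$-close to $W^+_{\mathrm{loc}}(f^n(x))$. Then for a.e.\ $y\in W^+(x)$, at times $n$ when $f^n(y)$ is also in the block, $W^-_{\mathrm{loc}}(f^n(y))$ is transverse to $W^+_{\mathrm{loc}}(f^n(x))$ and hence, by $C^1$-closeness, to $f^n(D)\subset W^u(O)$. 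Pulling back by $f^{-n}$ gives $W^-(y)\pitchfork W^u(O)\neq\emptyset$, i.e.\ $y\in\phcs(O)$. This is the step that actually delivers essential $u$-saturation, and the rest of your outline -- the Hopf argument with absolutely continuous laminations giving ergodicity of $m|\phc(O)$, then matching $\phcs(O)$, $\phcu(O)$, $\phc(O)$ mod $m$ via the single ergodic component -- is sound. So the structure of your argument is correct, but the central lemma as written would not go through without switching from the uniform $\lambda$-lemma at $O$ to the nonuniform one along the Pesin-hyperbolic orbit of $x$.
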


\begin{figure}[h]\label{f=pesin}
\includegraphics[scale=.35]{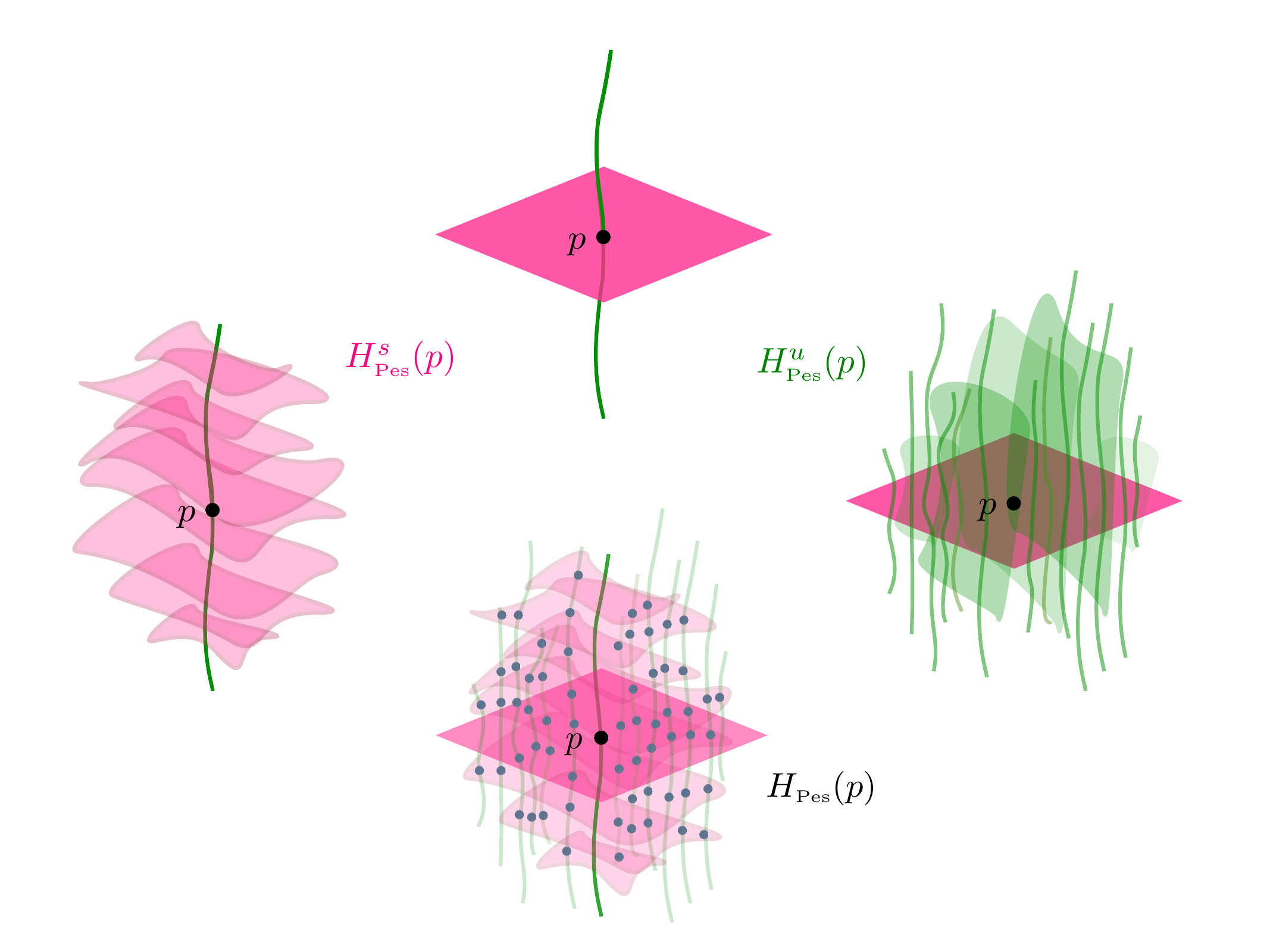}
\caption{The Pesin homoclinic class}
\end{figure}

\medskip

Recall that  $f\in \Diff^1_\mathrm{vol}(M)$   is weakly mixing if and only if
$f\times f$  is ergodic with respect to $m \times m$.

Given  a continuous function $\phi:M\times M \to \R$ and $\epsilon>0$, we denote by $\cU(\phi,\epsilon)$ the set of all $f \in \Diff^1_\mathrm{vol}(M)$ such that, for some $n \geq 1$, the set of all $(x,y) \in M \times M$ satisfying
$$
\left |\frac {1} {n} \sum_{k=0}^{n-1} \phi(f^k(x),f^k(y))-\int \phi(x,y) dm(x) dm(y) \right |<\epsilon
$$
has measure strictly larger than $1-\epsilon$.  Note that  $\cU(\phi,\epsilon)$ is open, and that for any dense subset $\Omega \subset C^0(M \times M,\R)$, $f \times f$ is ergodic if and only if $f$ belongs to $\bigcap_{\phi \in \Omega} \bigcap_{\epsilon>0} \cU(\phi,\epsilon)$.

We say that $f$ is {\em $\epsilon$-weak mixing} if it admits an invariant subset $X$ of measure strictly larger than $1-\epsilon$, such that $f|X$ is weak mixing.  Notice that if $f$ is $\epsilon$-weak mixing then
{\color{black} $f \in \cU(\phi,3 \epsilon\|\phi\|_{L^\infty})$} for every $\phi \in C^0(M \times M,\R)$.  Thus to prove the genericity statement of Corollary 2,  it is enough to prove that $\epsilon$-weak mixing is dense among the diffeomorphisms in $\Diff^1_\mathrm{vol}(M)$ with positive metric entropy.

Let $f\in \Diff^1_\mathrm{vol}(M)$ be a $C^1$-generic diffeomorphism given by Theorem B
and let us assume that it has positive metric entropy.
We may also assume that  $f$ has the following additional $C^1$-generic properties:
\begin{enumerate} 
\item $f$ is topologically transitive, by \cite[Th\'eor\`eme 1.3]{BC},
\item for any hyperbolic periodic point $p$, we have $W^u(p)\cap W^s(f(p))\neq\emptyset$
and the intersection is transverse, by \cite[Theorems 3 and 4]{AC}, and
\item there exist hyperbolic periodic points $p_1,\ldots, p_k$ such that for every $\epsilon>0$ and every $g\in\Diff^2_\mathrm{vol}(M)$ sufficiently $C^1$-close to $f$, there exists a $p_i$ with the following property:  the Pesin homoclinic class $\phc(O(p_i(g)))$ of the orbit $O(p_i)$ of $p_i$ has $m$-measure $>1-\epsilon$ {\color{black} and the restriction of the volume is ergodic, non-uniformly hyperbolic
and its Oseledets splitting is dominated, by \cite[Lemma 5.1]{AB}.}
\end{enumerate}

Let $p_1,\ldots,p_k$ be given by item (3) and let $\epsilon>0$.
By~\cite{A}, there exists
$g\in \Diff^2_\mathrm{vol}(M)$ arbitrarily $C^1$-close to $f$.
Then, by item (2) for each $i=1,\ldots,k$,
there still exists a transverse intersection point
between $W^u(p_i(g))$ and $W^s(g(p_i(g)))$ associated to the hyperbolic continuation $p_i(g)$.  By item (3)
there exists  $i\in 1,\ldots, k$ such that the Pesin homoclinic class  $\phc(O(p_i(g)))$ has $\Leb$-measure $>1-\epsilon$
{\color{black} and the restriction of the volume is ergodic, non-uniformly hyperbolic and its Oseledets splitting is dominated.}

It follows from \cite{Pe} that $\phc(O(p_i(g)))$ decomposes as a disjoint union
of measurable sets $A\cup g(A) \cup\dots \cup g^{\ell-1}(A)$ and that the restriction $m|A$ is Bernoulli
for $g^\ell$. On the other hand, since $W^u(p_i(g))\cap W^s(g(p_i(g)))\neq\emptyset$,
the Pesin homoclinic class of the orbits of $p_i(g)$ for $g$ and $g^\ell$ coincide,
implying by Theorem~\ref{t.criterion} that $m|\phc(O(p_i(g)))$ is ergodic for $g^\ell$.
This shows that $\ell=1$, and that $g$ is Bernoulli, and in particular weakly mixing, on $\phc(O(p_i(g)))$.  Thus $g$ is $\epsilon$-weakly mixing, and so $\epsilon$-weak mixing is dense in $\Diff^1_\mathrm{vol}(M)$.  This completes the proof of Corollary~2.
\qed
 
\bigskip
\noindent
{\color{black}
{\bf Acknowledgements.}\\
We thank Jean-Christophe Yoccoz and the Coll\`ege de France for their kind hospitality.
We are also grateful to the referee for a precise reading that helped us improve the text.}

\end{document}